\numberwithin{equation}{section}
\theoremstyle{plain}
\newtheorem{theorem}{Theorem}[section]
\newtheorem{lemma}[theorem]{Lemma}
\newtheorem{proposition}[theorem]{Proposition}
\newtheorem{conjecture}[theorem]{Conjecture}
\theoremstyle{definition}
\newtheorem{definition}[theorem]{Definition}
\newcommand{\bbF}{\mathbb{F}}
\newcommand{\N}{\mathbb{N}}
\newcommand{\R}{\mathbb{R}}
\newcommand{\Z}{\mathbb{Z}}
\newcommand{\cE}{\mathcal{E}}
\newcommand{\cH}{\mathcal{H}}
\newcommand{\cM}{\mathcal{M}}
\newcommand{\cP}{\mathcal{P}}
\newcommand{\cS}{\mathcal{S}}
\newcommand{\sE}{\mathscr{E}}
\newcommand{\bdr}{\mathbf{r}}
\newcommand{\bdone}{\mathbf{1}}
\newcommand{\bddelta}{\boldsymbol{\delta}}
\newcommand{\fkR}{\mathfrak{R}}
\newcommand{\ud}{\mathrm{d}}
\def\inn#1#2{\langle#1,#2\rangle}
\newcommand{\supp}{\mathrm{supp}\,}
\newcommand{\diam}{\mathrm{diam}\,}
\newcommand{\tang}{\mathrm{tang}}
\newcommand{\trans}{\mathrm{trans}}
\begin{document}

\title[Bounds for the strong spherical maximal operator]{Improved $L^p$ bounds for the strong spherical maximal operator}

\date{}

\begin{abstract} We study the $L^p$ mapping properties of the strong spherical maximal function, which is a multiparameter generalisation of Stein's spherical maximal function. We show that this operator is bounded on $L^p$ for $p > 2$ in all dimensions $n \geq 3$. This matches the conjectured sharp range $p>(n+1)/(n-1)$ when $n=3$. For $n=2$ the analogous estimate was recently proved by Chen, Guo and Yang.

Our result builds upon and improves an earlier bound of Lee, Lee and Oh. The main novelty is an estimate in discretised incidence geometry that bounds the volume of the intersection of thin neighbourhoods of axis-parallel ellipsoids. This estimate is then interpolated with the Fourier analytic $L^p$-Sobolev estimates of Lee, Lee and Oh. 
\end{abstract}

\author[J. Hickman]{ Jonathan Hickman }
\address{School of Mathematics and Maxwell Institute for Mathematical Sciences, James Clerk Maxwell Building, The King's Buildings, Peter Guthrie Tait Road, Edinburgh, EH9 3FD, UK.}
\email{jonathan.hickman@ed.ac.uk}

\author[J. Zahl]{Joshua Zahl}
\address{Department of Mathematics, University of British Columbia, Vancouver BC, V6T 1Z2, Canada}
\email{jzahl@math.ubc.ca}

\maketitle




\section{Introduction} 




\subsection{The strong spherical maximal operator} Let $n \geq 2$ and for a tuple of dilation parameters $\bdr = (r_1, \dots, r_n) \in (0,\infty)^n$, define the dilation map
\begin{equation*}
   \bddelta_{\bdr} \colon \R^n \to \R^n, \qquad \bddelta_{\bdr}(x_1, \dots, x_n) := (r_1x_1, \dots, r_nx_n). 
\end{equation*}
Let $\sigma$ denote normalised (to have mass 1) surface measure on the unit sphere $S^{n-1}$. The \textit{strong spherical maximal function} is then given by 
\begin{equation*}
    \cM_{\mathrm{st}}f(x) := \sup_{\bdr \in (0,\infty)^n} |f \ast \sigma_{\bdr}(x)| \qquad \textrm{for $f \in C_c(\R^n)$,}
\end{equation*}
where the dilated measure $\sigma_{\bdr}$ is defined via the action
\begin{equation*}
    \inn{\sigma_{\bdr}}{f} := \inn{\sigma}{f \circ \bddelta_{\bdr}} \qquad \textrm{for all $f \in C_c(\R^n)$.}
\end{equation*}
Thus, the operator $\cM_{\mathrm{st}}$ forms maximal averages over concentric axis-parallel ellipsoids and is a natural multiparameter variant of Stein's spherical maximal function~\cite{Stein1976}. Furthermore, $\cM_{\mathrm{st}}$ can be interpreted as a singular analogue of the classical strong maximal function (the latter is the analogous multiparameter variant of the Hardy--Littlewood maximal function).\medskip 




\subsection{The main result} This paper concerns the $L^p$ mapping properties of $\cM_{\mathrm{st}}$. Our main result reads as follows.

\begin{theorem}\label{thm: main} For all $n \geq 3$ and $p > 2$, there exists a constant $C_{n,p} > 0$ such that
\begin{equation*}
    \|\cM_{\mathrm{st}}f\|_{L^p(\R^n)} \leq C_{n,p} \|f\|_{L^p(\R^n)} \qquad \textrm{holds for all $f \in C_c(\R^n)$.}
\end{equation*}  
\end{theorem}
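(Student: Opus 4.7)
The plan is to reduce, via standard manipulations, to a bound at a single dyadic frequency scale and to combine two very different inputs: the $L^p$-Sobolev estimates of Lee--Lee--Oh (of Fourier-analytic nature), and a new geometric/incidence estimate for thin neighbourhoods of axis-parallel ellipsoids. Concretely, I would first linearise by selecting a measurable radius assignment $\bdr \colon \R^n \to (0,\infty)^n$ and studying the operator $T f(x) = f \ast \sigma_{\bdr(x)}(x)$. A Littlewood--Paley decomposition $f = \sum_{\lambda} P_\lambda f$ reduces matters to the single-frequency operator $T_\lambda$, which, after using the coordinate dilations to normalise each $r_i$ to the unit range, behaves like averaging over a thin $\lambda^{-1}$-neighbourhood of a unit-scale axis-parallel ellipsoid. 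The goal is then to prove
\begin{equation*}
\|T_\lambda f\|_{L^p(\R^n)} \leq C_{n,p,\varepsilon}\,\lambda^{-\eta(p)+\varepsilon} \|f\|_{L^p(\R^n)}
\end{equation*}
for some $\eta(p) > 0$ whenever $p > 2$; summing geometrically in $\lambda$ then gives the theorem, with the low-frequency piece controlled by the classical strong maximal function.

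The two inputs are the following. First, Lee--Lee--Oh furnish an $L^{p_0}$-Sobolev estimate at some exponent $p_0 > 2$, arising from stationary phase analysis of $\widehat{\sigma_{\bdr}}$. Second, I would prove a discretised incidence estimate bounding
\begin{equation*}
\bigl|\,\cN_{\lambda^{-1}}\!\bigl(\bddelta_{\bdr_1}(S^{n-1}) + x_1\bigr) \;\cap\; \cN_{\lambda^{-1}}\!\bigl(\bddelta_{\bdr_2}(S^{n-1}) + x_2\bigr)\,\bigr|
\end{equation*}
by a quantity depending on a carefully chosen multiparameter ``distance'' between the two configurations $(x_1,\bdr_1)$ and $(x_2,\bdr_2)$; crucially this distance must register both translation separation and disagreement of individual semi-axes $r_{1,i} - r_{2,i}$. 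Such a bound, inserted into a $TT^*$ computation, yields an $L^2$-based estimate for $T_\lambda$ restricted to a linearising set, with dependence on the refined level-set geometry. Real interpolation between this $L^2$ estimate and the Lee--Lee--Oh Sobolev bound then covers the entire range $p > 2$.

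The main obstacle, as advertised in the abstract, is the incidence estimate. In the single-parameter spherical case, intersections of two sphere-neighbourhoods are easily controlled by a tangential/transverse dichotomy. For axis-parallel ellipsoids the picture is genuinely multiparameter: two ellipsoids can be tangent along a high-dimensional variety if several semi-axes happen to coincide, and the sharp bound must degrade continuously as one moves off these degenerate strata. I would expect to attack this via a coordinate-by-coordinate analysis, separating the problem by how many pairs $(r_{1,i}, r_{2,i})$ are comparable, and combining elementary algebraic identities for quadrics with an inductive or bootstrapping argument on the number of ``near-coincident'' axes.

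Finally, to combine the pieces cleanly, I would work on a single level set $E = \{x : |T f(x)| > \alpha\}$ of the linearised operator, and use the $L^2$ incidence bound to majorise $\alpha^2 |E|$ by a geometric quantity, then interpolate this with the Fourier input using Bourgain's trick for passing from a single-exponent bound with decay to bounds on a full range. The delicate point is that the interpolation must respect the multiparameter anisotropic scaling $\bddelta_{\bdr}$, so some care will be needed to keep the exponents tracked across all $n$ parameters rather than along a single scaling direction.
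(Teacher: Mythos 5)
Your high-level skeleton matches the paper's: reduce to a discretised $L^2$-type estimate, interpolate against the Lee--Lee--Oh $L^p$-Sobolev bounds, and return to the global operator via multiparameter Littlewood--Paley theory. You also correctly flag the central obstacle — high-order tangencies between axis-parallel ellipsoids can inflate $|\cN_\delta(E_1)\cap\cN_\delta(E_2)|$ beyond what the single-parameter spherical tangential/transverse dichotomy allows. But the step where you propose to overcome this is precisely where the proposal has a genuine gap. The incidence bound you describe, with a multiparameter ``distance'' registering both $|x_1-x_2|$ and the semi-axis disagreements $|r_{1,i}-r_{2,i}|$, is simply not available in that form: for generic distinct centres one can always solve, within the $n$-dimensional space of admissible radii, the $n$ equations imposing tangency of the two ellipsoids at a prescribed point (this is the same parameter count that drives the Knapp example), so separating the semi-axes does not preclude large intersections. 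Your fall-back plan — a coordinate-by-coordinate induction on the number of near-coincident axes — does not engage with the actual degeneracy, which is governed by alignment of the gradient vectors $\nabla F_{x_1,\bdr_1}$ and $\nabla F_{x_2,\bdr_2}$ rather than by which semi-axes are near-equal.

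The key idea you are missing is \emph{variable slicing combined with exceptional-set removal}. The paper proves the C\'ordoba-style $L^2$ bound only for families of ellipsoids whose centres lie on a single line $\ell_k$ through the origin, one for each coordinate direction. The crucial observation is that for centres constrained to $\ell_k$, the locus on each ellipsoid where degenerate tangency with some other ellipsoid centred on $\ell_k$ can occur is a lower-dimensional variety that can be cut away \emph{uniformly} over all centres on the line by deleting the fixed cap $\{|\omega_k|^3<2c_n\}$; after this pruning, the refined annuli satisfy the clean intersection bound $|E_1^{\delta,k}\cap E_2^{\delta,k}|\lesssim\log\delta^{-1}\cdot\delta^2/(\delta+|t_1-t_2|)$ uniformly in the radii (Lemma~\ref{lem: volume}). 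Since the $n$ excised caps together cover nothing on the sphere (condition \eqref{eq: exceptional remove 1}), $M^\delta$ is dominated by the sum of the refined operators, and Fubini in the $n$ slicing directions reassembles the estimate. Proving the refined volume bound itself needs machinery your sketch does not contemplate: a Cauchy--Binet reduction to the near-zero set of an explicit polynomial system, an effective (quantitative) inverse function theorem, and a semi-algebraic regular-cell decomposition to control diameters of near-tangency level sets. Finally, your concluding interpolation step is over-engineered: once the discretised $L^2$ bound with $\delta^{-\varepsilon}$ loss is in hand, a standard passage to an $L^2$-Sobolev inequality followed by ordinary interpolation with the Lee--Lee--Oh $L^{p_0}$-Sobolev bound suffices, and no Bourgain-type level-set argument is required.
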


Theorem~\ref{thm: main} improves upon an earlier result of Lee, Lee and Oh \cite{LLO}, which established $L^p$-boundedness for $p > 2 \cdot \frac{n+1}{n-1}$. Moreover, Theorem~\ref{thm: main} establishes the sharp range of estimates for $n = 3$. However, it is likely that the result is not sharp in higher dimensions, and it is natural to conjecture the following.

\begin{conjecture}[Strong spherical maximal function conjecture]\label{conj: main} For all $n \geq 2$ and $p > \frac{n+1}{n-1}$, there exists a constant $C_{n,p} > 0$ such that
\begin{equation}\label{eq: strong max conj}
    \|\cM_{\mathrm{st}}f\|_{L^p(\R^n)} \leq C_{n,p} \|f\|_{L^p(\R^n)} \qquad \textrm{holds for all $f \in C_c(\R^n)$.}
\end{equation}
\end{conjecture}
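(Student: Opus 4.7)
The plan is to build on the framework of Lee, Lee and Oh~\cite{LLO} and sharpen it by injecting a new discretised incidence-geometric estimate, then interpolating with their Fourier-analytic Sobolev bound. Following LLO, I would first carry out a Littlewood--Paley decomposition $\sigma = \sum_k \sigma_k$ with $\widehat{\sigma_k}$ localised to frequencies $|\xi| \sim 2^k =: \delta^{-1}$, and reduce to proving, for each frequency-localised piece, a single-scale $L^p$ bound $\|\cM^k f\|_{L^p} \leq C_\varepsilon \delta^{-\varepsilon}\|f\|_{L^p}$ (which, interpolated against a gain at a higher exponent, produces a summable $\delta^{\eta}$-gain). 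After linearisation via a measurable selector $x \mapsto \bdr(x) \in [1,2]^n$, this amounts to controlling the operator $T_\delta f(x) = f \ast \sigma^\delta_{\bdr(x)}(x)$, where $\sigma^\delta_{\bdr}$ is an $L^1$-normalised indicator of the $\delta$-neighbourhood of the ellipsoid $\bddelta_{\bdr}S^{n-1}$.

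The central new ingredient would be a discretised incidence estimate: for any collection of axis-parallel ellipsoids $E_j = x_j + \bddelta_{\bdr_j}S^{n-1}$, $j=1,\dots,N$, satisfying a non-concentration hypothesis reflecting the full $n$-parameter freedom of $\bdr$, I would seek a bound of the shape
\begin{equation*}
 \int \Big(\sum_{j=1}^N \chi_{E_j^\delta}(y)\Big)^k \,\ud y \;\leq\; C_\varepsilon\, \delta^{-\varepsilon}\,\delta^{\alpha_k}\, N^{\beta_k}
\end{equation*}
for a suitable exponent pair $(\alpha_k,\beta_k)$ and every integer $k\geq 2$. Testing $T_\delta$ against indicator functions and dualising via a restricted-strong-type $TT^\ast$ argument turns the required $L^p$ bound for $p$ close to $2$ into precisely such a $k$-th moment estimate for the multiplicity function of the shells $E_j^\delta$. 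I would prove the incidence bound by induction on the dimension $n$: slicing the family by an axis-parallel hyperplane $\{x_n=c\}$ yields a family of axis-parallel ellipsoids in $\R^{n-1}$ whose semi-axes depend in a controlled fashion on $(\bdr_j,c)$, to which the inductive hypothesis applies after a Fubini integration in $x_n$. The base case $n=2$ should reduce to a Wolff-type tangency bound for axis-parallel ellipses, treatable by the two-ends / hairbrush machinery of Wolff and Schlag adapted to this axis-parallel setting.

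The principal obstacle is the propagation of the non-concentration hypothesis under the slicing reduction. Because the ellipsoids share a common set of axis directions, they admit many highly degenerate tangential configurations (for instance, two ellipsoids sharing a large portion of a curve when $r_{j,n}=r_{j',n}$), and such configurations must be separated from the transverse contributions via a broad/narrow dichotomy before the induction can close. Once the incidence bound is established, bilinear interpolation with the LLO Sobolev estimate at $p>2(n+1)/(n-1)$, together with a standard $\varepsilon$-removal argument, upgrades the $\delta^{-\varepsilon}$-lossy bound near $p=2$ to a true $\delta^{\eta}$-gain at every $p>2$; summing the Littlewood--Paley decomposition then yields Theorem~\ref{thm: main}.
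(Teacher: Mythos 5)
The statement you are asked to prove is Conjecture~\ref{conj: main}, which asserts $L^p$ boundedness for all $p > \frac{n+1}{n-1}$ and all $n \geq 2$. This is explicitly an \emph{open conjecture} in the paper: it is only verified for $n = 2$ (by citation to prior work) and $n = 3$ (where $\frac{n+1}{n-1} = 2$, so the conjecture coincides with Theorem~\ref{thm: main}). Your proposal, by your own final sentence, concludes ``Theorem~\ref{thm: main}'', that is, $L^p$ boundedness for $p > 2$. But for $n \geq 4$ we have $\frac{n+1}{n-1} < 2$, so $p > 2$ is a strictly smaller range than the conjecture requires, and nothing in your sketch --- the $k$-th moment incidence estimate, the dimensional induction, the broad/narrow dichotomy --- is shown to produce any estimate below $L^2$. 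You have therefore proposed a proof of the wrong statement: you address (part of) Theorem~\ref{thm: main}, not Conjecture~\ref{conj: main}, and the latter remains open for $n \geq 4$ both in the paper and in your outline. Even your interpolation step cannot possibly reach $p < 2$, since you interpolate a near-$L^2$ bound with an LLO bound valid only for large $p$.

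Even read charitably as a proposed proof of Theorem~\ref{thm: main}, the route differs substantially from the paper's and has an unresolved core. The paper does not use $k$-th moment estimates, induction on dimension, or a broad/narrow decomposition. It instead proves a single \emph{pairwise} volume bound, $|E_1^{\delta,k}\cap E_2^{\delta,k}| \lesssim \log\delta^{-1}\,\delta^2/(\delta+|t_1-t_2|)$, for annuli with $\delta$-separated collinear centres, after first deleting from each annulus an exceptional set capturing higher-order tangency with every ellipsoid centred on a fixed line; this feeds into a C\'ordoba $L^2$ duality argument (Proposition~\ref{prop: multiplicity}), giving the discretised bound Theorem~\ref{thm: discretised}, which is then interpolated with LLO. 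The degeneracy you flag --- pairs of ellipsoids sharing a semi-axis, producing long tangential overlap --- is exactly the issue the paper's exceptional-set removal is designed to handle, and it is the crux, not a technical nuisance. You name ``propagation of the non-concentration hypothesis under slicing'' as the principal obstacle but give no mechanism to overcome it; until that is supplied, the inductive scheme you sketch does not close, whereas the paper's pairwise/slicing argument does.
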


The conjectural range $p > \frac{n+1}{n-1}$ is necessary, in the sense that the inequality \eqref{eq: strong max conj} fails for all $p \leq \frac{n+1}{n-1}$. This can be shown using a modified Knapp-type example: we provide a heuristic for this example below in \S\ref{subsec: method} and a precise discussion in Appendix~\ref{app: necessity}. 

For $n = 2$, the conjecture is known from \cite{LLO_elliptic, CGY, PYZ}. More precisely, the $n=2$ case follows by interpolating the endpoint result from \cite{PYZ} with the Fourier analytic $L^p$-Sobolev estimates from either \cite{LLO_elliptic} or \cite{CGY}. Theorem~\ref{thm: main} verifies Conjecture~\ref{conj: main} in the $n=3$ and provides partial progress for $n \geq 4$. 

We remark that there is a well-developed and classical theory of such multiparameter maximal functions in the lacunary setting (where the radii $r_j$ are restricted to powers of $2$) due to Ricci and Stein \cite[Theorem 3.2]{RS1992}.




\subsection{A discretised estimate}\label{sec: discretised estimate} The main novelty in our approach, compared with the earlier work in \cite{LLO}, is to use a geometric argument to prove an $L^2$ estimate for a discretised variant of $\cM_{\mathrm{st}}$. To describe the setup, given $x = (x_1, \dots, x_n) \in \R^n$ and $\bdr = (r_1, \dots, r_n) \in (0, \infty)^n$, we let $E(x;\bdr)$ denote the axis-parallel ellipsoid with centre $x$ and principal radii $\bdr$. More precisely,
\begin{equation}\label{eq: defining fn}
    E(x;\bdr) := \big\{ y \in \R^n : F_{x,\bdr}(y) = 0 \big\} \quad \textrm{where} \quad F_{x,\bdr}(y) := \sum_{j=1}^n \frac{(y_j - x_j)^2}{r_j^2} -1.
\end{equation}
Furthermore, for $\delta > 0$, we consider the \textit{ellipsoidal annulus} (or \textit{homoeoid})
\begin{equation*}
E^{\delta}(x, \bdr) := \big\{   y \in \R^n : |F_{x,\bdr}(y)| < \delta \big\};
\end{equation*}
by the regularity of the defining function, when $\bdr\in[1,2]^n$ the set $E^{\delta}(x, \bdr)$ is comparable to the $\delta$-neighbourhood of $E(x, \bdr)$. In order to study $\cM_{\mathrm{st}}$, we consider the localised and discretised operators 
\begin{equation}\label{eq: defining M delta f}
    M^{\delta}f(x) := \sup_{\bdr \in [1, 2]^n} \fint_{E^{\delta}(x,\bdr)} |f|. 
\end{equation}
The main ingredient in the proof of Theorem~\ref{thm: main} is the following.

\begin{theorem}[Discretised $L^2$ bound]\label{thm: discretised} For all $n \geq 3$ and all $\varepsilon > 0$, there exists a constant $C_{n,\varepsilon} > 0$ such that
    \begin{equation}\label{eq: discretised global}
    \|M^{\delta}f\|_{L^2(\R^n)} \leq C_{n,\varepsilon} \delta^{-\varepsilon} \|f\|_{L^2(\R^n)}
\end{equation}
holds for all $0 < \delta < 1$ and all $f \in L^2(\R^n)$.
\end{theorem}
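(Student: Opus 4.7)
The plan is to linearise the supremum and reduce the $L^2$ bound to a geometric incidence estimate for the intersection volume of thin neighbourhoods of axis-parallel ellipsoids. By a measurable selection argument, it suffices to establish $\|T\|_{L^2 \to L^2} \leq C_\varepsilon \delta^{-\varepsilon}$ for the linearised operator $Tf(x) := \fint_{U_x} f$, where $U_x := E^\delta(x, \bdr(x))$ and $\bdr \colon \R^n \to [1,2]^n$ is an arbitrary measurable selector. A standard almost-disjoint partition of unity reduces to $f$ supported in a fixed bounded region, and then a dyadic level-set decomposition of $f$ and of $T\mathbf{1}_F$ further reduces matters to the restricted weak-type estimate
\begin{equation*}
\lambda^2 |G_\lambda| \leq C_\varepsilon \delta^{-\varepsilon} |F|, \qquad G_\lambda := \{x \in \R^n : T\mathbf{1}_F(x) > \lambda\},
\end{equation*}
uniformly in dyadic $\lambda \in (0,1]$ and bounded $F \subseteq \R^n$.

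The analytic device is a second-moment argument. Setting $\mu(y) := |\{x \in G_\lambda : y \in U_x\}|$, Fubini gives $\int_F \mu \gtrsim \lambda \delta |G_\lambda|$, so Cauchy--Schwarz yields $\int_{\R^n} \mu^2 \gtrsim \lambda^2 \delta^2 |G_\lambda|^2 / |F|$. Expanding the square and applying Fubini once more,
\begin{equation*}
\int_{\R^n} \mu(y)^2 \, dy = \int_{G_\lambda \times G_\lambda} |U_x \cap U_{x'}| \, dx \, dx',
\end{equation*}
reducing the $L^2$ bound to an upper bound on the summed pairwise intersection volumes of ellipsoidal annuli.

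The heart of the proof is then a geometric incidence estimate---the main novelty of the paper---of the shape
\begin{equation*}
\int_{G \times G} |U_x \cap U_{x'}| \, dx \, dx' \leq C_\varepsilon \bigl(\delta^{2-\varepsilon} |G|^2 + \delta^{n+1} |G|\bigr)
\end{equation*}
for any $G \subseteq \R^n$ contained in a bounded region. Two generic axis-parallel ellipsoids in $\R^n$ with shapes in $[1,2]^n$ intersect transversely in an $(n-2)$-dimensional variety, whose $\delta$-neighbourhood has volume $\lesssim \delta^2$; this yields the first term. The second term handles the near-diagonal contribution $|x - x'| \lesssim \delta$, where $|U_x \cap U_{x'}|$ may be as large as $|U_x| \sim \delta$ but the measure of such pairs is $\lesssim |G| \delta^n$. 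The main obstacle is proving this uniformly: one must quantitatively control near-tangential configurations where $E(x, \bdr) \cap E(x', \bdr')$ is nearly singular. The approach I would take is a fibering argument over the shape space $[1,2]^n$, using the diagonal quadratic structure of the defining forms $F_{x,\bdr}$ to confine the tangency locus in the $2n$-dimensional parameter space to a low-dimensional real-algebraic subvariety, whose $\delta$-neighbourhood then has small measure by a polynomial-method or Wongkew--Guth-type covering argument. The hypothesis $n \geq 3$ should enter as a transversality budget: the shape space is then rich enough that tangency imposes sufficiently many independent constraints on $(x', \bdr')$.

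Finally, combining the geometric estimate with the energy lower bound gives $\lambda^2 \delta^2 |G_\lambda|^2 / |F| \lesssim \delta^{2-\varepsilon} |G_\lambda|^2 + \delta^{n+1} |G_\lambda|$. If the first term dominates, then $\lambda^2 \lesssim \delta^{-\varepsilon} |F|$, and the trivial bound $|G_\lambda| \lesssim 1$ from localisation closes the case. If the second dominates, then $\lambda^2 |G_\lambda| \lesssim \delta^{n-1} |F|$, which is much smaller than required for $n \geq 3$. Summing across $O(\log \delta^{-1})$ dyadic values of $\lambda$ and absorbing the logarithm into $\varepsilon$ yields Theorem~\ref{thm: discretised}, conditional on the geometric estimate---which is where I expect essentially all the technical difficulty to lie.
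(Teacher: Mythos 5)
Your high-level framework---linearising, reducing to a restricted weak-type estimate, and running a second-moment argument---is sound and is essentially equivalent to the C\'ordoba-style duality argument the paper uses in Proposition~\ref{prop: multiplicity}. You are also right that the intersection-volume bound is where all the difficulty lies. The problem is that the estimate you state, and the sketch you give for proving it, do not engage with the principal obstruction. Two quibbles first: the intersection of two transverse $\delta$-shells at centre distance $d$ has volume $\sim\delta^2/d$, not $\delta^2$ (the $\delta$-neighbourhood heuristic fails where the two gradients nearly align), and correspondingly the off-diagonal contribution integrates to $\delta^2|G|$ rather than $\delta^2|G|^2$; this does not break your arithmetic but is symptomatic of the fact that you have not tracked the angle between the shells. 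The genuine gap is tangency. First-order tangent shells intersect in volume $\sim\delta^{(n+1)/2}\leq\delta^2$, which is harmless for $n\geq3$, but \emph{higher-order} tangent shells (matching a principal curvature as well) intersect in volume roughly $\delta\cdot\delta^{1/3}\cdot\delta^{(n-2)/2}$, which for $n=3$ is $\delta^{11/6}\gg\delta^2$. Because the shape selector $\bdr(\cdot)$ is adversarial and the shape space has dimension $n$, while higher-order tangency at a variable contact point imposes only two net constraints, the adversary has ample freedom to arrange higher-order tangency for a large set of pairs $(x,x')$. Your proposal to confine the bad locus to a low-dimensional algebraic subvariety of the $2n$-dimensional pair space does not address this: the selector need not be algebraic, and the bad set of pairs is not a variety.

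The paper resolves this via an observation your sketch does not contain: the set of $\omega\in E(x_1,\bdr_1)$ that can participate in a higher-order tangency with \emph{some} axis-parallel ellipsoid centred at $x_2$ depends only on the \emph{direction} $x_2-x_1$, not on $|x_2-x_1|$ or on $\bdr_2$. Hence if one slices $\R^n$ into lines parallel to a fixed direction $d_k$, a \emph{fixed}, selector-independent exceptional set can be removed from each annulus (the sets $E^{\delta,k}=E^\delta\cap A_{x,\bdr}(\Omega^k)$ with $\Omega^k=\{|\omega_k|^3\geq2c_n\}$), after which the clean bound $|E_1^{\delta,k}\cap E_2^{\delta,k}|\lesssim\log\delta^{-1}\cdot\delta^2/(\delta+|t_1-t_2|)$ of Lemma~\ref{lem: volume} holds uniformly for all pairs on the line. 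A Fubini step reduces the $L^2(\R^n)$ estimate to these one-dimensional slices, and $n$ choices of $d_k$ suffice to cover all of $E^\delta$. Proving the sliced intersection bound still requires real work (the non-degeneracy Lemma~\ref{lem: non deg}, the effective inverse function theorem, and the semi-algebraic covering of Lemma~\ref{lem: comp diam}), but the slicing is what makes a deterministic exceptional set, and hence a uniform pointwise intersection bound, possible at all. Without something playing that structural role, I do not see how to make your ``fibering over shape space'' step precise.
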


Owing to both the $\delta^{-\varepsilon}$-loss in the operator norm and the localisation of the radii, Theorem~\ref{thm: discretised} does not directly imply Theorem~\ref{thm: main}. However, the discretised $L^2$ bound \eqref{eq: discretised global} can be translated into an $L^p$--Sobolev bound for a localised variant of $\cM_{\mathrm{st}}$ that we will call $\cM_{\mathrm{loc}}$; a precise definition will be given in Section \ref{subsec: loc max op}. This in turn can be interpolated against $L^p$-Sobolev estimates from \cite{LLO}, leading to favourable $L^p$--Sobolev estimates for $\cM_{\mathrm{loc}}$ in the range $p > 2$. Finally, using multiparameter Littlewood--Paley theory, as in \cite{LLO}, we pass from $\cM_{\mathrm{loc}}$ to $\cM_{\mathrm{st}}$ and thereby arrive at Theorem~\ref{thm: main}. The details of these Fourier analytic arguments are presented in \S\ref{sec: Fourier analysis}. 




\subsection{Methodology}\label{subsec: method} Here we discuss the key ideas of the proof of Theorem~\ref{thm: discretised}, which involves a \textit{variable slicing} argument. Variants of this basic approach were recently used by the first author and Jan{\v c}ar to study Stein's spherical maximal function \cite{HJ}, by Chen, Guo and Yang to solve the $n=2$ case of Conjecture~\ref{conj: main}  \cite{CGY}, and by the second author to study planar maximal operators \cite{Zahl_plane}. That said, the details of the argument used here differ significantly from these works.\medskip

\noindent \textit{The Knapp example.} To motivate the argument, it is useful to first consider the basic Knapp example, which provides the necessary condition $p \geq \frac{n+1}{n-1}$ for \eqref{eq: strong max conj}. Here we give a heuristic for the Knapp example using parameter counting; we provide a more thorough presentation of the example in Appendix~\ref{app: necessity}, where we also rule out the endpoint case $p = \frac{n+1}{n-1}$. Fix a centre $x \in \R^n$ with, say, $3/2 < |x| < 5/2$ and a point $\omega \in S^{n-1}$ on the unit sphere. We consider all ellipsoids $E(x,\bdr)$ centred at $x$; this is an $n$-dimensional family, parametrised by the principal radii $\bdr = (r_1, \dots, r_n)$. If we impose the condition that $\omega \in E(x, \bdr)$, then this corresponds to a single constraint $F_{x, \bdr}(\omega) = 0$ on $\bdr$, where $F_{x, \bdr}$ is the defining function in \eqref{eq: defining fn}. Furthermore, if we impose the condition that $E(x,\bdr)$ is tangent to $S^{n-1}$ at $\omega$, then this corresponds to $n-1$ additional constraints. Thus, we have $n$ parameters and $n$ constraints. We therefore expect that, for many choices of $x$, there should be a choice of $\bdr$ such that $E(x, \bdr)$ is tangent to $S^{n-1}$ at the specific point $\omega$ (this is made precise in Lemma~\ref{lem: tangencies}).\medskip

For $0 < \delta < 1$, we let $K(\delta)$ be a rectangular slab of dimensions $\delta \times \delta^{1/2} \times \cdots \times \delta^{1/2}$ centred at $\omega$ and lying parallel to the tangent plane to $S^{n-1}$ at $\omega$. By formalising the above observations, we may find a set $F \subseteq \R^n$ of ($n$-dimensional) measure roughly equal to $1$ such that for each $x \in F$ there exists an ellipsoid centred at $x$ which intersects $K(\delta)$ on a set of (surface) measure roughly equal to $\delta^{(n-1)/2}$. Taking $f := \chi_{K(\delta)}$ to be the characteristic function of $K(\delta)$ and plugging this into \eqref{eq: strong max conj}, we arrive at the necessarily condition $p \geq \frac{n+1}{n-1}$.\medskip

\noindent \textit{Higher order tangency.} We would be able to strengthen the Knapp example, and thereby disprove Conjecture~\ref{conj: main}, if it were possible to ensure the ellipsoids $E(x, \bdr)$ above are not only tangent to $S^{n-1}$ at $\omega$, but are \textit{higher order tangent} (for instance, in addition one of the principal curvatures of the ellipsoid is equal to $1$ at $\omega$). However, imposing a higher order tangency condition leads to a situation where the number of constraints is larger than the number of free parameters, which should effectively rule out a counterexample of this kind. Our argument is based on making this simple idea precise.\medskip

\noindent \textit{Duality and exceptional sets.} To prove Theorem~\ref{thm: discretised}, we apply a standard duality argument, following Cordoba's \cite{Cordoba1977} classical proof of the $L^2$ boundedness of the Kakeya maximal function (see also \cite{Carbery1988} or \cite[Chapter 22]{Mattila_book}). This reduce matters to bounding expressions of the form 
\begin{equation}\label{eq: multiplicity}
    \big\| \sum_{E \in \sE} \chi_{E^{\delta}} \big\|_{L^2(\R^n)}^2 = \sum_{E_1, E_2 \in \sE} |E_1^{\delta} \cap E_2^{\delta}|,
\end{equation}
where $\sE$ is a family of axis-parallel ellipsoids with $\delta$-separated centres and principal radii belonging to $[1,2]$. From this we are immediately led to consider the volume $|E_1^{\delta} \cap E_2^{\delta}|$ for pairs of ellipsoids $E_1$, $E_2 \in \cE$. As suggested by our earlier discussion, an unfavourable situation occurs if, generically, $E_1$ and $E_2$ have high order tangency, which results in a large intersection between $E_1^{\delta}$ and $E_2^{\delta}$. However, our parameter counting heuristic suggests that higher order tangency should not be generic behaviour. \medskip

Fix an ellipsoid $E_1$ and let $x_2$ denote a choice of centre for the second ellipsoid. We now consider the set $B_1$ of all $\omega \in E_1$ for which there exists some $\bdr \in [1,2]^n$ such that $E(x_2,\bdr)$ has high order tangency
with $E_1$ at $\omega$. By our earlier parameter counting, we expect the set $B_1$ to be small. In particular, we expect it to be a lower dimensional submanifold of $E_1$. Our strategy is to remove the exceptional set $B_1$ from $E_1$, leaving behind the \textit{refined ellipsoid} $E_1^{\star} := E_1 \setminus B_1$, so that $|E_1^{\delta, \star} \cap E_2^{\delta}|$ satisfies a favourable bound whenever $E_2$ is centred at $x_2$ (here $E_1^{\delta, \star}$ is essentially the $\delta$-neighbourhood of $E_1^{\star}$).\medskip

\noindent \textit{Slicing.} The exceptional set $B_1 = B_1(x_2)$ for $E_1$ is defined in relation to some choice of centre $x_2$. Whilst $B_1(x_2)$ itself may be small, the union of these sets over all centres $x_2 \in \R^n$ is large. However, the key observation which drives the proof is:

\begin{quote}
   For $\ell$ a line through the centre of $E_1$, we can take the \textit{same} exceptional set $B_1(x_2)$ for all $x_2 \in \ell$. 
\end{quote}
This suggests a variable slicing argument. We fix a line $\ell$ through $0$ and consider the ostensibly harder problem of proving uniform bounds for the sliced norms
\begin{equation*}
   \sup_{v \in \R^n} \|M^{\delta}_{\star}f\|_{L^2(\ell + v)}.
\end{equation*}
Here the maximal function $M^{\delta}_{\star}$ is defined with respect to the refined ellipsoids $E^{\star}$, formed by removing the `universal' exceptional set associated to $\ell + v$. Since our operator is local, such a sliced bound easily leads to a true $L^2(\R^n)$ estimate. By translation invariance, we may further assume $v = 0$.\medskip

When we apply the duality argument to this sliced formulation, we are led to consider a variant of \eqref{eq: multiplicity} where now all the ellipsoids in $\sE$ are centred on $\ell$ and each instance of $E^{\delta}$ is replaced with the corresponding refined set $E^{\delta, \star}$. In particular, if $E_1$ and $E_2$ are higher order tangent, then $E_1^{\delta, \star} \cap E_2^{\delta, \star} = \emptyset$, which means that the higher order tangent pairs do not contribute to the estimation of the $L^2$ norm. This leads to a favourable estimate for \eqref{eq: multiplicity}: see Proposition~\ref{prop: multiplicity}.\medskip

\noindent \textit{Covering by refinements.} Finally, it remains to deal with the contribution to the operator which arises from the exceptional set. To do this, we choose a (finite) set $\{\ell_k\}_{k=1}^n$ of lines through $0$ and derive exceptional sets for each $1 \leq k \leq n$. We let $M_k^{\delta}$ denote the maximal operator defined with respect to the refined ellipsoids $E^k := E \setminus B^k$, where $B^k$ is the exceptional set associated to $\ell_k$. Given a point $\omega \in E$, by choosing $\{\ell_k\}_{k = 1}^n$ appropriately it is possible to ensure that there exists some $1 \leq k \leq n$ such that $\omega$ lies in the refinement $E^k $. This allows us to bound the $M^{\delta}$ in terms of the family of maximal operators $M_k^{\delta}$.




\subsection*{Notational conventions} Given a list of objects $L$ and non-negative real numbers $A$, $B$, we write $A \lesssim_L B$ or $A = O_L(B)$ if $A \leq C_L B$ where $C_L > 0$ is a constant depending only on the objects listed in $L$ and a choice of dimension $n$. If $V$ is a vector subspace of $\R^n$, then we let $\mathrm{proj}_V \colon \R^n \to V$ denote the orthogonal projection onto $V$.




\subsection*{Structure of the article} In \S\ref{sec: slicing dual} we carry out the preliminary slicing and duality steps of the argument, and reduce matters to proving volume bounds for intersections between refined ellipsoidal annuli. In \S\ref{sec: vol bounds}, we prove the required volume bounds and thereby complete the proof of Theorem~\ref{thm: discretised}. In \S\ref{sec: Fourier analysis} we combine Theorem~\ref{thm: discretised} with the $L^p$-Sobolev estimates from \cite{LLO} to derive Theorem~\ref{thm: main}. Finally, in the appendix we discuss the details of the modified Knapp example, which demonstrates the sharpness of our theorem in the $n=3$ case.




\subsection*{Thanks} The authors would like to thank David Beltran and Juyoung Lee for comments and corrections to an earlier version of this manuscript.




\subsection*{Acknowledgements} The first author is supported by New Investigator Award UKRI097. The second author is supported by an NSERC Discovery Grant.




\section{Slicing and duality}\label{sec: slicing dual}




\subsection{Discretised operators} 
Recall the definition of the ellipsoid  $E(x; \bdr)$ and the ellipsoidal annulus $E^\delta(x; \bdr)$ from \S\ref{sec: discretised estimate}. In what follows, we will fix a small constant $0 < c_n < 1$, which is chosen depending only on the ambient dimension $n$, subject to the forthcoming requirements of the proof. In order to study $\cM_{\mathrm{st}}$, we consider the localised and discretised operators
\begin{equation}\label{eq: new Mdelta}
    M^{\delta}f(x) := \sup_{\bdr \in \fkR} \fint_{E^{\delta}(x,\bdr)} |f|,
\end{equation}
where $\fkR := \big[1, 1 + c_n^2\big]^n$ is a restricted set of radii. Note that the above definition is a slight abuse of notion, since the operator $M^\delta$ was previously defined in \eqref{eq: defining M delta f}. In this previous definition, $\bdr$ ranged over the domain $[1,2]^n$ rather than $[1, 1 + c_n^2]^n$. This distinction is harmless, since for each $1\leq p\leq\infty$, these two variants of $M^{\delta}$ have comparable $L^p(\R^n)\to L^p(\R^n)$ operator norm. 

Our goal is to prove Theorem~\ref{thm: discretised} with $M^{\delta}$ now defined as in \eqref{eq: new Mdelta}.




\subsection{Removing the exceptional sets} Clearly, provided the dimensional constant $c_n > 0$ is chosen sufficiently small, we may ensure that
\begin{equation}\label{eq: exceptional remove 1}
   \max_{1 \leq k \leq n} |\omega_k|^3 \geq 2c_n > 0 \qquad \textrm{for all $\omega \in \R^n$ with $1/2 \leq |\omega| \leq 2$.}  
\end{equation}
Since the operator $M^{\delta}$ is local and translation-invariant, to prove \eqref{eq: discretised global} it suffices to prove the local estimate
\begin{equation*}
    \|M^{\delta}f\|_{L^2(B(0, 1))} \lesssim_{\varepsilon} \delta^{-\varepsilon} \|f\|_{L^2(\R^n)}.
\end{equation*}

For each $(x, \bdr) \in \R^n \times (0,\infty)^n$, define the affine map 
\begin{equation*}
 A_{x,\bdr} \colon \R^n \to \R^n, \quad    \omega \mapsto x + \bddelta_{\bdr} \,\omega.
\end{equation*}
For all $0 < \delta < 1$, note that $E^{\delta}(x, \bdr)$ is the image of the (spherical) annulus $E^{\delta}(0, \bdone)$ under $A_{x, \bdr}$, where $\bdone = (1, \dots, 1)$. We decompose 
\begin{equation*}
    E^{\delta}(x,\bdr) = \bigcup_{k = 1}^n E^{\delta, k}(x,\bdr) \quad \textrm{where} \quad E^{\delta, k}(x,\bdr) := E^{\delta}(x,\bdr) \cap A_{x,\bdr}\big(\Omega_k\big)
\end{equation*}
for
\begin{equation}\label{eq: Omega k}
    \Omega^k := \big\{ \omega \in \R^n : |\omega_k|^3 \geq 2c_n \big\}.
\end{equation}
Here we use \eqref{eq: exceptional remove 1}, which ensures that the sets $E^{\delta, k}(x, \bdr)$ indeed cover $E^{\delta}(x,\bdr)$. 
Observe that the sets $E^{\delta, k}(x,\bdr)$ are translation-invariant, in the sense that
\begin{equation*}
    E^{\delta, k}(x,\bdr) = x + E^{\delta, k}(0,\bdr) \qquad \textrm{for all $(x,\bdr) \in \R^n \times (0,\infty)^n$.}
\end{equation*}

Using the above, we define 
\begin{equation*}
    M_k^{\delta}f(x) := \sup_{\bdr \in \fkR} \frac{1}{|E^{\delta}(x,\bdr)|} \int_{E^{\delta, k}(x,\bdr)} |f| \qquad \textrm{for $1 \leq k \leq n$,}
\end{equation*}
so that 
\begin{equation}\label{eq: M delta dominated}
    M^{\delta}f(x) \leq \sum_{k = 1}^n M_k^{\delta}f(x).
\end{equation}

We now carry out a preliminary `slicing' of the domain. Define  
\begin{equation}\label{eq: dirs}
 d_k = (d_{k,1}, \dots, d_{k,n}) \in \R^n \quad \textrm{with} \quad   d_{k,j} := 
    \begin{cases}
        1 & \textrm{if $j \neq k$} \\
        0 & \textrm{if $j = k$}
    \end{cases}
    \qquad \textrm{for $1 \leq k \leq n$,}
\end{equation}
let $\ell_k$ denote the $1$-dimensional linear subspace of $\R^n$ spanned by $d_k$, and let $V_k := \ell_k^{\perp}$ denote the codimension $1$ subspace normal to $d_k$. By the Fubini--Tonelli theorem and \eqref{eq: M delta dominated}, we have
\begin{align*}
  \|M^{\delta}f\|_{L^2(B(0, 1))} &\leq \sum_{k = 1}^n \Big( \int_{V_k \cap B(0, 1)} \|M^{\delta}_kf\|_{L^2(\ell_k(v_k))}^2\,\ud v_k \Big)^{1/2} \\
  &\lesssim \max_{1 \leq k \leq n} \sup_{v_k \in V_k \cap B(0, 1)} \|M_k^{\delta}f\|_{L^2(\ell_k(v_k))},
\end{align*}
where $\ell_k(v_k) := \{td_k + v_k : t \in [-1,1]\} \subseteq \ell_k + v_k$ is a line segment.




\subsection{Duality and reduction to volume bounds}
We now apply a standard duality argument, and exploit the translation-invariance of the problem, to reduce matters to the following multiplicity bound.

\begin{proposition}\label{prop: multiplicity} For $1 \leq k \leq n$ and $0 < \delta < 1$, we have
\begin{equation*}
    \big\| \sum_{E \in \sE} \chi_{E^{\delta, k}} \big\|_{L^2(\R^n)} \lesssim \log \delta^{-1} \delta^{1/2} \big[ \# \sE\big]^{1/2}
\end{equation*}
whenever $\sE$ is a family of axis-parallel ellipsoids with the following properties:
\begin{enumerate}[i)]
    \item The centres are $\delta$-separated and lie on $\ell_k(0)$;
    \item The principal radii belong to $\fkR$.
\end{enumerate}
\end{proposition}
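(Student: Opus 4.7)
The plan is a Córdoba-style $L^2$ argument: expand
\begin{equation*}
\Big\| \sum_{E \in \sE} \chi_{E^{\delta, k}} \Big\|_{L^2(\R^n)}^2 = \sum_{E_1, E_2 \in \sE} |E_1^{\delta, k} \cap E_2^{\delta, k}|
\end{equation*}
and control each pairwise intersection geometrically. The diagonal contribution $\sum_{E\in\sE}|E^{\delta,k}|\lesssim \delta\cdot\#\sE$ is benign, so the core task is to establish the off-diagonal intersection bound
\begin{equation*}
|E_1^{\delta, k} \cap E_2^{\delta, k}| \lesssim \frac{\delta^2}{|x_1 - x_2|}
\end{equation*}
for distinct $E_i = E(x_i, \bdr_i) \in \sE$. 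Granting this, the $\delta$-separation of centres along $\ell_k(0)$ produces a harmonic-series sum $\sum_{E_2\neq E_1}\delta^2/|x_1 - x_2|\lesssim \delta\log\delta^{-1}$, and summing over $E_1$ yields the claimed estimate (with a slightly stronger logarithmic factor than stated).

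To prove the intersection bound I would change variables via $y = x_1 + \bddelta_{\bdr_1}\omega$, so that $y\in E_1^{\delta,k}$ is equivalent to $\big||\omega|^2-1\big|\lesssim\delta$ together with $\omega\in\Omega^k$, while $y\in E_2^\delta$ becomes $|G(\omega)|\lesssim\delta$ for an explicit quadratic polynomial $G$. Setting $\bdu := x_1 - x_2$ and $\bds := 2(\bdr_1 - \bdr_2)$, a direct computation gives
\begin{equation*}
\nabla G(\omega) = 2\omega + 2\bdu + 2\,\bds\odot\omega + O\bigl(|\bds|^2 + |\bds||\bdu|\bigr),
\end{equation*}
where $\odot$ denotes componentwise product. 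The $2\omega$ term is the sphere normal, so the tangential gradient on $S^{n-1}$ is
\begin{equation*}
\nabla^{\tang}G(\omega) \approx 2\bigl(\bdu - \langle\bdu,\omega\rangle\omega\bigr) + 2\bigl(\bds\odot\omega - \langle\bds,\omega\odot\omega\rangle\omega\bigr).
\end{equation*}

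The key geometric input is that $\bdu$ is transverse to $S^{n-1}$ at every $\omega\in\Omega^k$: since $\bdu\in\ell_k$ forces $u_k = 0$ and $|\omega_k|\gtrsim c_n^{1/3}$, Cauchy--Schwarz gives $|\langle\bdu,\omega\rangle|\leq|\bdu|\sqrt{1-\omega_k^2}\leq|\bdu|\sqrt{1-c_n^{2/3}}$, so the first tangential term has magnitude at least $c_n^{1/3}|\bdu|$. Combined with $|\bds|\lesssim c_n^2$, this yields $|\nabla^{\tang}G(\omega)|\gtrsim|\bdu|$ in the regime $|\bdu|\gtrsim|\bds|$, and then the slab $\{|G|<\delta\}$ cuts $S^{n-1}\cap\Omega^k$ in a set of surface measure $\lesssim \delta/|\bdu|$; pulling back through the bi-Lipschitz change of variables and multiplying by the annular thickness $\delta$ recovers the intersection bound.

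The main obstacle I anticipate is the complementary regime $|\bdu|\ll|\bds|$, where $\bdu$ alone does not dominate the tangential gradient. Here I would analyse the $\bds$-term more carefully: a direct evaluation of $G$ on $E_1$ shows that uniform rescalings $\bds\propto\bdone$ cause $|G|\gtrsim|\bds|$, so the intersection is already empty whenever $|\bds|\gg\delta$. In the intermediate non-uniform cases the second tangential term has magnitude comparable to the weighted standard deviation of $\bds$ with weights $\omega\odot\omega$, and the restriction $\omega\in\Omega^k$ ensures the $k$-th coordinate carries nontrivial weight — in particular $(\bds\odot\omega)_k = s_k\omega_k$ contributes to this standard deviation along a coordinate axis where $\bdu$ vanishes, preventing complete tangential cancellation between the $\bdu$- and $\bds$-contributions. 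Making this cancellation argument quantitative across all sub-cases of $(|\bdu|,|\bds|)$ is, I expect, the main technical work of the proof, and is the $L^2$-level manifestation of the ``no higher-order tangency'' heuristic from the introduction.
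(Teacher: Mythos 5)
Your outer argument --- expanding the square, separating the diagonal, and dyadically summing over the $\delta$-separated centres on $\ell_k(0)$ to reduce everything to the pairwise bound $|E_1^{\delta,k}\cap E_2^{\delta,k}|\lesssim \delta^2/(\delta+|x_1-x_2|)$ --- is exactly the paper's proof of the proposition; the paper isolates that pairwise bound as Lemma~\ref{lem: volume} and proves it in \S\ref{sec: vol bounds}. The gap in your proposal is therefore entirely in your sketch of the volume bound, and it is a genuine one rather than a technicality.

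Concretely, you try to run a uniform transversality argument: a pointwise lower bound $|\nabla^{\tang}G(\omega)|\gtrsim|\bdu|$ on $S^{n-1}\cap\Omega^k$, with the regime $|\bdu|\ll|\bds|$ handled by the assertion that the restriction $\omega\in\Omega^k$ ``prevents complete tangential cancellation''. That assertion is false. For fixed $(x_1,\bdr_1)$ and $(x_2,\bdr_2)$ the tangency set $\{\omega\in S^{n-1}\cap\Omega^k:\nabla F_{x_2,\bdr_2}(\omega)\parallel\nabla F_{x_1,\bdr_1}(\omega)\}$ is in general a non-empty (finite) set: after normalising $(x_1,\bdr_1)=(0,\bdone)$, the tangency equations $G_j^k(\omega,t,\bdr)=0$ from \eqref{eq: Cauchy--Binet} are solved by $\omega_j\approx (t/r_j^2)/(1/r_j^2-1/r_k^2)$ for $j\neq k$, which lies in $[-1,1]$ as soon as $t=|x_1-x_2|$ is small compared with the differences of the radii; such solutions can have $|\omega_k|$ close to $1$ and hence lie in $\Omega^k$. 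The role of $\Omega^k$ is not to rule tangency out but to make the tangency set zero-dimensional (this is the non-degeneracy statement, Lemma~\ref{lem: non deg}). At these points $\nabla^{\tang}G$ vanishes, so no uniform lower bound holds, and the near-tangency contribution cannot be argued away: it contributes $\approx(\delta/t)^{(n-3)/2}(\delta^2/t)$ to the intersection, which for $n=3$ already saturates the target bound $\delta^2/t$. What is actually needed --- and what constitutes the paper's main technical work --- is a covering statement for the near-tangency region: the sublevel set $\{\|J_{x,\bdr}(\omega)\|<\rho\}\cap\Omega^k$ is contained in $O(1)$ balls of radius $\rho/t$ up to a null set (Lemma~\ref{lem: near root}, proved via an effective inverse function theorem for the polynomial map $\Phi^k$), after which the coarea formula and a dyadic decomposition in the transversality parameter $\rho\in[(t\delta)^{1/2},t]$ give Lemma~\ref{lem: volume}. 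Your sketch contains no substitute for this step, and (a minor additional point) your claimed log-free volume bound would in any case need to be reconciled with the tangential contribution just described.
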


The key ingredient in the proof of Proposition~\ref{prop: multiplicity} is the following volume bound.

\begin{lemma}\label{lem: volume} Let $1 \leq k \leq n$ and $\delta > 0$. Given $x_j := t_jd_k$ for $t_j \in [-1, 1]$ and $\bdr_j \in \fkR$ for $j = 1$, $2$, we have
\begin{equation*}
    \big|E^{\delta, k}(x_1, \bdr_1) \cap E^{\delta, k}(x_2, \bdr_2)| \lesssim \log \delta^{-1} \frac{\delta^2}{\delta + |t_1 - t_2|}. 
\end{equation*}
\end{lemma}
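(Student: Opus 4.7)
The plan is to apply the coarea formula with the two-dimensional map $\Phi = (F_{x_1,\bdr_1}, F_{x_2,\bdr_2})$ to reduce the $n$-dimensional volume bound to an $(n-2)$-dimensional surface integral on the codimension-$2$ manifold $E_1 \cap E_2$, and then control the Jacobian $|\nabla F_{x_1,\bdr_1} \wedge \nabla F_{x_2,\bdr_2}|$ from below using the refined-set condition $|y_k| \gtrsim_n 1$. Since $|\nabla F_{x_j,\bdr_j}| \asymp 1$ uniformly on $E^\delta(x_j,\bdr_j)$, this reduction gives
\[
|E^{\delta,k}(x_1,\bdr_1) \cap E^{\delta,k}(x_2,\bdr_2)| \lesssim \delta^2 \sup_{|u|,|v|<\delta} \int_{\{F_{x_1,\bdr_1}=u,\, F_{x_2,\bdr_2}=v\}\cap\{|y_k|\geq (2c_n)^{1/3}\}} \frac{\ud\mathcal{H}^{n-2}}{|\nabla F_{x_1,\bdr_1} \wedge \nabla F_{x_2,\bdr_2}|},
\]
so it suffices to bound this surface integral by $\log\delta^{-1}/(\delta+|t_1-t_2|)$.

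For the key gradient estimate, I would parametrise $E_1$ by $\omega \in S^{n-1}$ via $y = x_1 + \bddelta_{\bdr_1}\omega$. Using $x_j = t_j d_k$ and $(\bdr_j)_i \in [1,1+c_n^2]$, a direct computation shows that on $E_1$,
\[
\nabla_{E_1} F_{x_2,\bdr_2}(y) = -2(t_1-t_2)\bigl(d_k - (d_k\cdot\omega)\omega\bigr) + O(c_n^2),
\]
and Cauchy--Schwarz applied to $\omega \in S^{n-1}$ yields
\[
\bigl|d_k - (d_k\cdot\omega)\omega\bigr|^2 = (n-1) - \bigl(\textstyle\sum_{i\neq k}\omega_i\bigr)^2 \geq (n-1)\omega_k^2.
\]
Since $|\nabla F_{x_1,\bdr_1}\wedge\nabla F_{x_2,\bdr_2}| \asymp |\nabla_{E_1}F_{x_2,\bdr_2}|$, in the regime $|t_1-t_2|\geq C_0 c_n^2$ (for $C_0=C_0(n)$ large), the refined-set condition gives $|\nabla F_{x_1,\bdr_1}\wedge\nabla F_{x_2,\bdr_2}| \gtrsim |t_1-t_2|$, so the surface integral is $\lesssim 1/|t_1-t_2|$ and the desired volume bound $\delta^2/|t_1-t_2|$ follows with no logarithmic loss.

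In the residual regime $|t_1-t_2| \leq C_0 c_n^2$, the linear-in-$(t_1-t_2)$ term can be comparable to the $O(c_n^2)$ error coming from the differing radii, and the Jacobian may degenerate at first-order tangencies of $E_1$ and $E_2$. I expect the $\log\delta^{-1}$ factor to arise here from a dyadic decomposition $|\nabla F_{x_1,\bdr_1}\wedge\nabla F_{x_2,\bdr_2}| \in [2^{-j-1}, 2^{-j}]$ with $\delta+|t_1-t_2| \lesssim 2^{-j} \lesssim 1$, combined with a bound $\lesssim 2^{-j}/(\delta+|t_1-t_2|)$ on the $\mathcal{H}^{n-2}$-measure of each dyadic shell. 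The main obstacle is making this last measure bound precise: one has to exploit the explicit quadratic structure of $F_{x_1,\bdr_1}$ and $F_{x_2,\bdr_2}$ — together with the refined-set condition, which (as indicated by the parameter-counting heuristic from the introduction) excludes higher-order tangency of $E_1$ and $E_2$ — in order to control how rapidly the Jacobian can vanish and balance the dyadic summation exactly against the target denominator $\delta + |t_1-t_2|$.
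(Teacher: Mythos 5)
Your overall strategy coincides with the paper's: apply the coarea formula to the pair $(F_{x_1,\bdr_1},F_{x_2,\bdr_2})$, use the condition $|y_k|\gtrsim_n 1$ to control the Jacobian $\|J\|=|\nabla F_{x_1,\bdr_1}\wedge\nabla F_{x_2,\bdr_2}|$, and decompose dyadically in the size of $\|J\|$. Your treatment of the regime $|t_1-t_2|\gtrsim c_n^{2}$ is essentially right (though with the bound $|P_{\omega^\perp}d_k|\gtrsim|\omega_k|\gtrsim c_n^{1/3}$ the threshold should really be $|t_1-t_2|\gtrsim c_n^{5/3}$ to beat the $O(c_n^2)$ error from the differing radii); in that regime the whole refined annulus is transversal and the fibre bound $\cH^{n-2}(\gamma)\lesssim 1$ (an algebraic degree bound, Lemma~\ref{lem: fibre length}) finishes the argument.

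However, the step you flag as ``the main obstacle'' is not a technicality to be made precise --- it is the actual content of the lemma, and your proposal does not supply it. When $|t_1-t_2|$ is small, first-order tangency genuinely occurs on the refined set, and one needs a quantitative bound on the $\cH^{n-2}$-measure of the sublevel sets $\{\|J\|<\rho\}$ restricted to the refined annulus. The paper proves (Lemma~\ref{lem: near root}) that this set is covered by $O(1)$ balls of radius $O(\rho/t)$, which together with Lemma~\ref{lem: fibre length} gives the shell bound $(\rho/t)^{n-2}$ (your proposed $\rho/t$ would also suffice, but is likewise unproven). Establishing this requires the Cauchy--Binet expansion of $\|J\|^2$ into the minors $G^k_{i,j}$, the identity \eqref{eq: degenerate G} reducing them to the $n-1$ functions $G^k_j$, the determinant computation of Lemma~\ref{lem: non deg} showing $|D_\omega(\Phi^k)^{-1}|\lesssim t^{-1}$ on $\Omega^k$ (this is exactly where $|\omega_k|^3\geq 2c_n$ enters), and an effective inverse function theorem with semi-algebraic cell decomposition to convert the derivative bound into a diameter bound for $O(1)$ pieces. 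In addition, your dyadic range $2^{-j}\gtrsim\delta+|t_1-t_2|$ omits the most tangent piece, where coarea is useless because the Jacobian in the denominator is too small; the paper handles the region $\|J\|<2(t\delta)^{1/2}$ separately via the covering by balls of radius $(\delta/t)^{1/2}$ and the trivial estimate $|E^\delta(0,\bdone)\cap B(\xi,R)|\lesssim\delta R^{n-1}$. Without these ingredients the proof is incomplete.
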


We stress that the volume bound involves the sets $E^{\delta, k}$, which are each formed by removing some exceptional set from the parent annulus $E^{\delta}$. It is crucial that the exceptional set is removed to avoid degenerate behaviour arising from high order tangencies between the parent annuli. 

Once Lemma~\ref{lem: volume} is established, the proof of Proposition~\ref{prop: multiplicity} follows from the standard $L^2$ Kakeya argument of C\'ordoba~\cite{Cordoba1977} (see also \cite{Carbery1988} or \cite[Chapter 22]{Mattila_book}).

\begin{proof}[Proof of Proposition~\ref{prop: multiplicity}] Fixing $\sE$ a family of ellipsoids as in the statement of the proposition and expanding out the square in the $L^2$ norm, we have
\begin{equation}\label{eq: cordoba 1}
    \big\| \sum_{E \in \sE} \chi_{E^{\delta, k}} \big\|_{L^2(\R^n)}^2 = \sum_{E_1 \in \sE} \sum_{E_2 \in \sE} |E_1^{\delta, k} \cap E_2^{\delta, k}|.
\end{equation}

For $E_1 \in \sE$, we dyadically partition the sum over $E_2 \in \sE$ in \eqref{eq: cordoba 1} according to the distance between the centres. In particular, define
\begin{align*}
  \sE^{E_1}_{< \delta} &:= \big\{  E_2 \in \sE : |x(E_2) - x(E_1)| < \delta \big\}, \\
  \sE^{E_1}_{\tau} &:= \big\{E_2 \in \sE : \tau/2 \leq |x(E_2) - x(E_1)| < \tau\big\}
\end{align*}
for $\delta \leq \tau \leq 1$, where $x(E)$ denotes the centre of an ellipsoid $E$. Since, by hypothesis, the centres $\{x(E) : E \in \sE\}$ form a $\delta$-separated subset of the line segment $\ell_k(0)$, we have
\begin{equation*}
    \big[\#\sE^{E_1}_{< \delta}\big] \lesssim 1 \qquad \textrm{and} \qquad  \big[\#\sE^{E_1}_{\tau}\big] \lesssim \delta^{-1}\tau \qquad \textrm{for all $\delta \leq \tau \leq 1$ and all $E_1 \in \sE$.}
\end{equation*}
Combining these observations with the volume bound from Lemma~\ref{lem: volume}, 
\begin{align}
    \nonumber
   \sum_{E_2 \in \sE} |E_1^{\delta, k} \cap E_2^{\delta, k}| &\leq  \sum_{E_2 \in \sE^{E_1}_{< \delta}} |E_1^{\delta, k} \cap E_2^{\delta, k}| + \sum_{\substack{\delta \leq \tau \leq 1 \\ \mathrm{dyadic}}} \sum_{E_2 \in \sE^{E_1}_{\tau}} |E_1^{\delta, k} \cap E_2^{\delta, k}| \\
   \nonumber
   & \lesssim \log \delta^{-1} \delta^2\Big( \delta^{-1} \big[\#\sE^{E_1}_{< \delta}\big] + \sum_{\substack{\delta \leq \tau \leq 1 \\ \mathrm{dyadic}}} \tau^{-1} \big[\#\sE^{E_1}_{\tau}\big] \Big) \\
   \label{eq: cordoba 2}
   & \lesssim \big(\log \delta^{-1}\big)^2 \delta.
\end{align}

To conclude the proof, we apply the estimate \eqref{eq: cordoba 2} to \eqref{eq: cordoba 1} for each $E_1 \in \sE$ to deduce that
\begin{equation*}
    \big\| \sum_{E \in \sE} \chi_{E^{\delta, k}} \big\|_{L^2(\R^n)}^2 \lesssim \big(\log \delta^{-1}\big)^2 \delta \big[\# \sE\big].
\end{equation*}
Taking square roots, this implies the desired inequality. 
\end{proof}

It remains to prove the volume bound in Lemma~\ref{lem: volume}. 




\section{Volume bounds for intersecting ellipsoidal annuli}\label{sec: vol bounds}




\subsection{Controlling the tangency set} Recall that $\bdone := (1, \dots, 1)$ and $E(0, \bdone) = S^{n-1}$ is the unit sphere in $\R^n$. Fixing $x \in \R^n$, $\bdr \in [1/2,2]^n$ and $0 < \delta < 1$, consider the intersection 
\begin{align*}
 E^{\delta}(0, \bdone) \cap E^{\delta}(x, \bdr)& = \big\{\omega \in \R^n : |F_{0, \bdone}(\omega)| < \delta \textrm{ and }   |F_{x, \bdr}(\omega)| < \delta \big\} \\
 &= \big(F_{0, \bdone}, F_{x, \bdr}\big)^{-1}\big([-\delta, \delta]^2\big). 
\end{align*}
For $u = (u_1, u_2) \in [-1,1]^2$ let 
\begin{equation*}
    \gamma_{x, \bdr}(u) :=  (F_{0, \bdone}, F_{x, \bdr})^{-1}(\{u\})  = \{\omega \in \R^n : F_{0, \bdone}(\omega) =  u_1 \textrm{ and } F_{x, \bdr}(\omega) =  u_2 \big\}
\end{equation*}
denote the fibre of the map $(F_{0, \bdone}, F_{x, \bdr}) \colon \R^n \to \R^2$. 

\begin{lemma}\label{lem: fibre length} For $x \in \R^n \setminus \{0\}$, $u \in [-1,1]^2$ and $0 < \rho < 1$, we have
\begin{equation*}
    \sup_{\xi \in \R^n} \cH^{n-2}(\gamma_{x, \bdr}(u) \cap B(\xi, \rho)) \lesssim \rho^{n-2}.
\end{equation*}
\end{lemma}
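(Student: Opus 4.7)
The set $\gamma_{x, \bdr}(u)$ is the common zero set in $\R^n$ of the two quadratic polynomials
\begin{align*}
P_1(\omega) &:= F_{0, \bdone}(\omega) - u_1 = |\omega|^2 - (1 + u_1), \\
P_2(\omega) &:= F_{x, \bdr}(\omega) - u_2 = \sum_{j=1}^n \frac{(\omega_j - x_j)^2}{r_j^2} - (1 + u_2).
\end{align*}
My plan is to realise $\gamma_{x, \bdr}(u)$ as a real algebraic variety of dimension at most $n-2$ whose complexity is bounded purely in terms of $n$, and then to invoke a standard Hausdorff measure bound for such varieties.

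The first step is to check that the complex zero set of $(P_1, P_2)$ has pure dimension $n-2$. Each $P_i$ has a nondegenerate (hence full-rank) quadratic part, which implies that each is irreducible over $\C$ whenever $n \geq 3$. It therefore suffices to rule out $P_1 = \lambda P_2$ for some $\lambda \in \C \setminus \{0\}$. Matching pure quadratic coefficients forces $\lambda = r_j^2$ for every $1 \leq j \leq n$, and then matching the linear coefficients forces $x_j = 0$ for every $j$, contradicting the hypothesis $x \neq 0$. Consequently, $\gamma_{x, \bdr}(u)$ is (the real locus of) a complete intersection of two quadrics, of Bezout degree at most $4$.

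The desired bound then follows from a standard Hausdorff measure estimate for real algebraic varieties: for any $V \subseteq \R^n$ cut out by polynomials of degree at most $D$ and of real dimension at most $k$, one has
\[
\cH^k(V \cap B(\xi, \rho)) \lesssim_{n, D, k} \rho^k \qquad \textrm{uniformly in } \xi \in \R^n \textrm{ and } \rho > 0.
\]
Applied with $D = 2$ and $k = n-2$, this gives the claim, with an implicit constant depending only on $n$.

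The only substantive obstacle is verifying the dimension assertion, which is precisely where the hypothesis $x \neq 0$ enters. A more hands-on argument avoiding any black-box algebraic geometry would apply the implicit function theorem on the open locus where $\nabla P_1$ and $\nabla P_2$ are linearly independent and parametrise $\gamma_{x, \bdr}(u)$ as a graph there, then absorb the contribution of the lower-dimensional tangency locus (the set where the two gradients are parallel, a $1$-parameter family cut out on the sphere); this is in principle elementary but notationally heavier than appealing to the variety bound directly.
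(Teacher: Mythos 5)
Your proof is correct and follows essentially the same route as the paper: the paper likewise observes that $\gamma_{x,\bdr}(u)$ is a proper intersection of two distinct quadrics, hence an $(n-2)$-dimensional variety of degree at most $4$, and then invokes the standard bound $\cH^{d}(X \cap B(\xi,\rho)) \lesssim \operatorname{degree}(X)\,\rho^{d}$ (citing Wongkew). Your coefficient-matching verification that $x \neq 0$ rules out proportionality of the two defining polynomials is a correct elaboration of the paper's brief "proper intersection" claim.
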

Lemma \ref{lem: fibre length} is an immediate consequence of the following standard estimate (see, for instance, \cite{Wongkew1993}). Let $X\subset\R^n$ be a $d$-dimensional algebraic variety. Then for all $\xi\in\R^n$ and $\rho>0$, we have
\begin{equation*}
    \cH^{d}(X \cap B(\xi, \rho))\lesssim \operatorname{degree}(X) \rho^d.
\end{equation*}
In Lemma \ref{lem: fibre length}, the set $\gamma_{x, \bdr}(u)$ is the proper intersection of two ellipsoids in $\R^n$, and thus is an $(n-2)$-dimensional variety of degree at most 4.

Continuing with the above setup, we decompose the sets $E^{\delta, k}(0, \bdone)$ for $1 \leq k \leq n$ according to the size of the angle between the normal vectors $\nabla F_{0,\bdone}(\omega)$ and $\nabla F_{x,\bdr}(\omega)$. To this end, define the $2 \times n$ Jacobian matrix
\begin{equation*}
   J_{x, \bdr}(\omega)  := 
    \begin{bmatrix}
        \frac{\partial F_{0,\bdone}}{\partial \omega_1}(\omega) & \dots & \frac{\partial F_{0,\bdone}}{\partial \omega_n}(\omega) \\[2pt]
        \frac{\partial F_{x ,\bdr}}{\partial \omega_1}(\omega) & \dots & \frac{\partial F_{x ,\bdr}}{\partial \omega_n}(\omega)
    \end{bmatrix}
\end{equation*}
and set
\begin{align*}
   \|J_{x, \bdr}(\omega)\| &:=  \big(\det J_{x, \bdr}(\omega) J_{x, \bdr}(\omega)^{\top}\big)^{1/2} \\
   &= \big(|\nabla F_{0,\bdone}(\omega)|^2|\nabla F_{x,\bdr}(\omega)|^2 - \inn{\nabla F_{0,\bdone}(\omega)}{\nabla F_{x,\bdr}(\omega)}^2 \big)^{1/2}.
\end{align*}
Note, in particular, that the above quantity vanishes if and only if $\nabla F_{0,\bdone}(\omega)$ and $\nabla F_{x,\bdr}(\omega)$ are parallel. We consider the sets
\begin{equation*}
 E_{x, \bdr, < \rho}^{\delta, k}(0, \bdone) :=  \big\{\omega \in E^{\delta, k}(0, \bdone) : \|J_{x, \bdr}(\omega)\| < \rho \big\} \quad \textrm{for $\rho > 0$.}
\end{equation*}
With these definitions, the key result is as follows.

\begin{lemma}\label{lem: near root} Provided $c_n > 0$ is chosen sufficiently small, there exists a dimensional constant $C_n \geq 1$ such that the following holds. Let $1 \leq k \leq n$ and $x := t \tilde{d}_k$ for some $t \in (0,2]$ and $\tilde{d}_k \in \R^n$ satisfying $|\tilde{d}_k - d_k|_{\infty} < c_n^2$ and $\bdr \in [1/2, 2]^n$. For $0 < \delta \leq \rho \leq 1$, there exists some set $\Xi^k(t, \bdr; \rho) \subset \R^n$ satisfying
\begin{equation*}
 \# \Xi^k(t, \bdr; \rho)  \lesssim 1 \qquad \textrm{and} \qquad  E_{x, \bdr, < \rho}^{\delta, k}(0, \bdone) \subseteq \bigcup_{\xi \in \Xi^k(t, \bdr; \rho)} B(\xi, C_n\rho/t) \cup N, 
\end{equation*}
where $N \subseteq \R^n$ is a Lebesgue null set.
\end{lemma}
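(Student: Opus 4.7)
The strategy is to localise near each critical point $\omega^\ast$ of $F_{x,\bdr}|_{S^{n-1}}$. By Lagrange multipliers, these satisfy $\omega^\ast_j=x_j/\mu_j$ with $\mu_j:=1-\lambda r_j^2$, and the sphere constraint turns into a polynomial equation $\sum_j x_j^2/\mu_j^2=1$ in $\lambda$ of degree at most $2n$; thus there are at most $2n$ critical points, and taking $\Xi^k(t,\bdr;\rho)$ to consist of those in $\Omega^k$ gives $\#\Xi^k\leq 2n$.

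For each $i\neq k$, Cauchy--Binet yields $|M_{ik}(\omega)|<\rho/4$, where after substituting $x=t\tilde{d}_k$ we have $M_{ik}(\omega)/4=\omega_i[\omega_kA_i-t\tilde{d}_{k,k}/r_k^2]+\omega_kB_i$ with $A_i:=1/r_k^2-1/r_i^2$ and $B_i:=t\tilde{d}_{k,i}/r_i^2$. At the critical point, a direct computation using $t\tilde{d}_{k,k}=\mu_k\omega^\ast_k$ gives
\[
\partial_{\omega_i}(M_{ik}/4)\big|_{\omega^\ast}=-\omega^\ast_k\mu_i/r_i^2,\qquad \partial_{\omega_k}(M_{ik}/4)\big|_{\omega^\ast}=\omega^\ast_i t\tilde{d}_{k,k}/(r_k^2\omega^\ast_k),
\]
and the unique non-zero second derivative is $\partial_{\omega_i\omega_k}(M_{ik}/4)=A_i$, so the Taylor expansion of $M_{ik}$ about $\omega^\ast$ is exact and quadratic. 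In the non-degenerate regime $|A_i|\gtrsim t$ for every $i\neq k$, one has $|\mu_i|\gtrsim t$ and hence $|\partial_{\omega_i}(M_{ik}/4)|_{\omega^\ast}|\gtrsim c_n^{1/3}t$, while $|\partial_{\omega_k}(M_{ik}/4)|_{\omega^\ast}|\lesssim tc_n^{5/3}$.

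Let $M:=\max_{i\neq k}|\omega_i-\omega^\ast_i|$. The sphere condition $\bigl|\,|\omega|^2-1\bigr|<\delta$ together with $|\omega^\ast_k|\geq (2c_n)^{1/3}$ fixes (after selecting a sign branch for $\omega_k$) $|\omega_k-\omega^\ast_k|\lesssim (M+\delta)/c_n^{1/3}$. Substituting this into the bound $|M_{ik}|<\rho/4$ and absorbing the quadratic term $A_i(\omega_i-\omega^\ast_i)(\omega_k-\omega^\ast_k)$ via a small-$M$ bootstrap yields $M\lesssim \rho/(c_n^{1/3}t)=C_n\rho/t$ with dimensional constant $C_n:=c_n^{-1/3}$; the two sign choices for $\omega_k$ double the number of balls, keeping $\#\Xi^k\lesssim 1$. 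In the degenerate regime $|A_i|\ll t$ for some $i\neq k$, no critical point in $\Omega^k$ lies on that branch, and direct inspection of $|M_{ik}|<\rho/4$ (using $|B_i|\geq t/8$, $|\omega_iA_i|\ll t$, and division by $|\omega_k|\geq (2c_n)^{1/3}$ to absorb the cross-term $|\omega_it\tilde{d}_{k,k}/r_k^2|\lesssim tc_n^2$) forces $t\lesssim\rho$; then $\rho/t\gtrsim 1$ and a single ball of radius $C_n\rho/t$ centred at the origin covers all of $E^\delta(0,\bdone)\cap\Omega^k$. The null set $N$ accommodates lower-dimensional collision loci where two Lagrange multipliers coincide.

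\textbf{Main obstacle.} The delicate point is closing the bootstrap: although $\partial_{\omega_k}M_{ik}$ at $\omega^\ast$ is only of size $tc_n^{5/3}$, coupling with the sphere equation reintroduces a contribution $O(c_n^{4/3}M)$ in the bound for each $|\omega_i-\omega^\ast_i|$, which must remain strictly subordinate to $M$ as the iteration closes; a sufficiently small choice of the dimensional constant $c_n$ achieves this, and simultaneously guarantees that the quantitative dichotomy between the Morse-type regime $|A_i|\gtrsim t$ and the degenerate regime $|A_i|\ll t$ is clean.
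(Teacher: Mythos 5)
Your approach---Lagrange multipliers to locate the critical points of $F_{x,\bdr}|_{S^{n-1}}$, an exact quadratic Taylor expansion of the functions $G_i^k$ about each such point $\omega^\ast$, and a bootstrap to contain the sublevel set in small balls---is genuinely different from the paper's, and several of the computations are correct and interesting: the degree-$2n$ bound on the number of critical $\lambda$-values, the identity $\partial_{\omega_i}G_i^k(\omega^\ast) = -\omega^\ast_k \mu_i/r_i^2$, the smallness $|\mu_k|\lesssim t c_n^{5/3}$ forced by $|\tilde d_{k,k}|<c_n^2$ and $|\omega^\ast_k|\gtrsim c_n^{1/3}$, and the dichotomy based on the size of $A_i = 1/r_k^2 - 1/r_i^2$ all check out. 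The degenerate regime, where you conclude $t\lesssim\rho$ and take a single ball, is also sound.

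However, the non-degenerate case has a genuine gap. The Taylor expansion is performed about a specific critical point $\omega^\ast$, and the bootstrap hinges on the quadratic cross-term $A_i(\omega_i-\omega^\ast_i)(\omega_k-\omega^\ast_k)$ being subordinate, which requires $M:=\max_i|\omega_i-\omega^\ast_i|$ to already be small. But nothing in the argument establishes a priori that an arbitrary $\omega\in E_{x,\bdr,<\rho}^{\delta,k}(0,\bdone)$ lies near \emph{any} critical point $\omega^\ast$. The sublevel set $\{\|J_{x,\bdr}\|<\rho\}$ could, for all the argument shows, contain connected components far from all zeros of $\Phi^k$, or components that are long and thin with diameter $\gg\rho/t$ even though the Jacobian is pointwise small there. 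A continuity/continuation argument starting at $\omega^\ast$ (where $M=0$) and propagating along the set requires knowing in advance that the relevant connected component actually contains a critical point and is path-connected in a quantitatively controlled way---and this is precisely the hard step. The paper handles it with semi-algebraic machinery: an effective inverse function theorem (Lemma~\ref{lem: effective inverse}) bounds the number of diffeomorphism pieces, Paw\l{}ucki's regular $L$-cell decomposition (Lemma~\ref{lem: L cell decomp}) furnishes pieces on which the intrinsic and extrinsic metrics are Lipschitz-equivalent, and only then can the inverse-Jacobian bound from Lemma~\ref{lem: non deg}~ii) be integrated along a controlled path to produce $\diam \lesssim\rho/t$. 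Your ``main obstacle'' paragraph addresses only the algebraic smallness of $c_n$ needed to close the contraction, not this more basic structural issue of how the bootstrap gets started; and the remark that the null set $N$ ``accommodates lower-dimensional collision loci where two Lagrange multipliers coincide'' does not engage with the actual role $N$ plays (in the paper, $N$ is the union of cell boundaries arising in the semi-algebraic decomposition). As written, the argument establishes a local rigidity near each critical point but does not prove the global containment the lemma asserts.
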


Once Lemma~\ref{lem: near root} is established, the desired volume bound in Lemma~\ref{lem: volume} easily follows.

\begin{proof}[Proof of Lemma~\ref{lem: volume}] Write $\bdr_1 := (r_{1,1}, \dots, r_{1, n})$ and let $\tilde{d}_k := (\delta_{\bdr_1})^{-1}d_k$, so that
\begin{equation*}
  |\tilde{d}_k - d_k|_{\infty} = \max_{1 \leq j \leq n}|\tilde{d}_{k,j} - d_{k,j}| = \max_{1 \leq j \leq n} \frac{|1 - r_{1,j}|}{r_{1,j}} |d_{k,j}| \leq c_n^2,
\end{equation*}
where the last step relies on the restriction $\bdr_1 \in \fkR$. By applying an affine transformation, we may assume $(x_1, \bdr_1) = (0, \bdone)$, provided we work with $(x, \bdr) := (x_2, \bdr_2)$ satisfying $x = t \tilde{d}_k$ for some $t \in [0, 2]$ and $\bdr \in [1/2, 2]^n$. We may also assume $t > 10 \delta$, since otherwise the bound is trivial. 

Consider the subsets of $E^{\delta, k}(0, \bdone)$ given by
\begin{equation*}
    E_{x, \bdr,  \tang}^{\delta, k}(0, \bdone) := E_{x, \bdr, < 2(t\delta)^{1/2}}^{\delta, k}(0, \bdone), \quad E_{x, \bdr, \trans}^{\delta, k}(0, \bdone) := E^{\delta, k}(0, \bdone) \setminus E_{x, \bdr, < t}^{\delta, k}(0, \bdone)
\end{equation*}
and
\begin{equation*}
    E_{x, \bdr, \rho}^{\delta, k}(0, \bdone) := E_{x, \bdr, < 2\rho}^{\delta, k}(0, \bdone) \setminus E_{x, \bdr, < \rho}^{\delta, k}(0, \bdone)
\end{equation*}
so that
\begin{equation*}
  E^{\delta, k}(0, \bdone)  = E_{x, \bdr, \tang}^{\delta, k}(0, \bdone) \cup E_{x, \bdr, \trans}^{\delta, k}(0, \bdone) \cup \bigcup_{\substack{(t\delta)^{1/2} \leq \rho \leq t \\ \mathrm{dyadic}}} E_{x, \bdr, \rho}^{\delta, k}(0, \bdone).
\end{equation*}

On the one hand, Lemma~\ref{lem: near root} implies that
\begin{equation*}
   | E_{x, \bdr, \tang}^{\delta, k}(0, \bdone) \cap E^{\delta}(x, \bdr)| \lesssim \max_{\xi \in \R^n} |E^{\delta}(0, \bdone) \cap B(\xi, 2C_n(\delta/t)^{1/2})| \lesssim (\delta/t)^{(n-3)/2}(\delta^2/t).   
\end{equation*}
On the other hand, by the coarea formula (see, for instance, \cite[Theorem 3.2.12, p.249]{Federer1969}), 
\begin{equation*}
    | E_{x, \bdr, \trans}^{\delta, k}(0, \bdone) \cap E^{\delta}(x, \bdr)| = \int_{[-\delta, \delta]^2}  \int_{\gamma_{x, \bdr}(u) \cap E_{x, \bdr, \trans}^{\delta, k}(0, \bdone)} \frac{\ud \cH^{n-2}(\omega)}{\|J_{x, \bdr}(\omega)\|}\,\ud u.
\end{equation*}
Since $\|J_{x,\bdr}(\omega)\| \geq t$ for $\omega \in E_{x, \bdr, \trans}^{\delta, k}(0, \bdone)$ and $\cH^{n-2}(\gamma_{x,\bdr}(u)) \lesssim 1$ for all $u \in [-\delta, \delta]^2$, it follows that
\begin{equation*}
    | E_{x, \bdr, \trans}^{\delta, k}(0, \bdone) \cap E^{\delta}(x, \bdr)| \lesssim \delta^2/t. 
\end{equation*}

Finally, given $(t\delta)^{1/2} \leq \rho \leq t$ dyadic, the coarea formula similarly gives 
\begin{equation*}
    | E_{x, \bdr, \rho}^{\delta, k}(0, \bdone) \cap E^{\delta}(x, \bdr)| = \int_{[-\delta, \delta]^2}  \int_{\gamma_{x, \bdr}(u) \cap E_{x, \bdr, \rho}^{\delta, k}(0, \bdone) \setminus N} \frac{\ud \cH^{n-2}(\omega)}{\|J_{x, \bdr}(\omega)\|}\,\ud u,
\end{equation*}
where $N$ is the Lebesgue null set featured in Lemma~\ref{lem: near root}. Recall that $\|J_{x,\bdr}(\omega)\| \geq \rho$ for $\omega \in E_{x, \bdr, \rho}^{\delta, k}(0, \bdone)$ and $E_{x, \bdr, \rho}^{\delta, k}(0, \bdone) \subseteq E_{x, \bdr, < 2\rho}^{\delta, k}(0, \bdone)$. Furthermore, by Lemma~\ref{lem: near root}, given any $u \in [-\delta, \delta]^2$ we have
\begin{align*}
    \cH^{n-2}\big(\gamma_{x, \bdr}(u) \cap E_{x, \bdr, < 2\rho}^{\delta, k}(0, \bdone)\setminus N \big) &\lesssim \max_{\xi \in \R^n} \cH^{n-2}\big(\gamma_{x, \bdr}(u) \cap B(\xi, 2C_n\rho/t) \big) \\
    &\lesssim (\rho/t)^{n-2},
\end{align*}
where the last step is by Lemma~\ref{lem: fibre length}. Thus, altogether we have
\begin{equation*}
    | E_{x, \bdr, \rho}^{\delta, k}(0, \bdone) \cap E^{\delta}(x, \bdr)| \lesssim (\rho/t)^{n-3}(\delta^2/t).
\end{equation*}

Combining the preceding bounds, 
\begin{equation*}
   | E^{\delta, k}(0, \bdone) \cap E^{\delta, k}(x, \bdr)| \lesssim (\delta^2/t)\sum_{\substack{(t\delta)^{1/2} \leq \rho \leq t \\ \mathrm{dyadic}}} (\rho/t)^{n-3}  \lesssim \log \delta^{-1} (\delta^2/t),
\end{equation*}
as required. We remark that the logarithmic factor is only incurred when $n = 3$.  
\end{proof}

\subsection{Proof of Lemma~\ref{lem: near root}} Let $1 \leq k \leq n$, $x = t\tilde{d}_k$ for some $t \in (0, 2]$ and $\tilde{d}_k \in \R^n$ satisfying $|\tilde{d}_k - d_k|_{\infty} < c_n^2$ and $\bdr \in [1/2, 2]^n$. Our first step towards Lemma~\ref{lem: near root} is to study the vanishing set of the function $\omega \mapsto \|J_{x, \bdr}(\omega)\|$. By the Cauchy--Binet formula, 
\begin{equation}\label{eq: Cauchy--Binet}
  \|J_{x, \bdr}(\omega)\|^2 =  \sum_{1 \leq i < j \leq n} \det
    \begin{vmatrix}
        \frac{\partial F_{0, \bdone}}{\partial \omega_i}(\omega) & \frac{\partial F_{0, \bdone}}{\partial \omega_j}(\omega) \\
        \frac{\partial F_{x, \bdr}}{\partial \omega_i}(\omega) & \frac{\partial F_{x, \bdr}}{\partial \omega_j}(\omega)
    \end{vmatrix}^2 = 16\sum_{1 \leq i < j \leq n} G_{i,j}^k(\omega, t, \bdr)^2
\end{equation}
where
\begin{equation*}
    G_{i,j}^k(\omega, t, \bdr) := 
    \Big(\frac{1}{r_j^2} - \frac{1}{r_i^2}\Big)\omega_i\omega_j - t\Big(\frac{\tilde{d}_{k,j}\omega_i}{r_j^2}- \frac{\tilde{d}_{k,i}\omega_j}{r_i^2}\Big)
    \end{equation*}
for $1 \leq i < j \leq n$. Let $G_j^k := G_{j,k}^k$ for $1 \leq j \leq n$ and observe that
\begin{equation}\label{eq: degenerate G}
  \omega_k G_{i,j}^k(\omega, t, \bdr) =  \omega_j G_i^k(\omega, t, \bdr) - \omega_i G_j^k(\omega, t, \bdr)  \quad \textrm{for $1 \leq i < j \leq n$ with $i$, $j \neq k$.}
\end{equation}

Consider the domain 
\begin{equation}\label{eq: X^k}
    X^k := \Omega^k \times (0,2] \times [1/2,2]^n 
\end{equation}
where $\Omega^k$ is defined in \eqref{eq: Omega k}, and the map $\Phi^k \colon X^k \to \R^n$ given by
\begin{equation}\label{eq: defn Phi k}
\Phi^k(\omega, t, \bdr)  := \Big(G_1^k (\omega, t, \bdr), \dots, G_{k-1}^k (\omega, t, \bdr), G_{k+1}^k (\omega, t, \bdr), \dots, G_n^k (\omega, t, \bdr), \frac{F_{0,\bdone}(\omega)}{2} \Big). 
\end{equation}
If $(\omega, t, \bdr) \in X^k$ satisfies $\Phi^k (\omega,t,\bdr) = 0$, then $\omega \in E(0,\bdone)$ and, by \eqref{eq: degenerate G}, we have $\omega_k G_{i,j}^k(\omega, t, \bdr) = 0$ for all $1 \leq i < j \leq n$. If $\omega_k = 0$, then the condition $G_j^k(\omega, t, \bdr) = 0$ implies that $\omega_j = 0$ for $1 \leq j \leq n$ with $j \neq k$ and so $\omega = 0$, contradicting $\omega \in E(0, \bdone)$. Hence, we must have $\omega_k \neq 0$ and so $G_{i,j}^k(\omega, t, \bdr) = 0$ for all $1 \leq i < j \leq n$. By the definition of the $G_{i,j}^k$ functions, this means there exists some $\lambda \in \R$ such that 
\begin{equation*}
    \nabla F_{x, \bdr}(\omega) = \lambda \nabla F_{0, \bdone}(\omega).
\end{equation*}

\begin{lemma}\label{lem: non deg} There exists a dimensional constant $0 < \bar{c}_n < 1$ such that if $1 \leq k \leq n$ and $(\omega, t, \bdr) \in X^k$ with $t > 0$ satisfies $|\Phi^k(\omega, t, \bdr)| < \bar{c}_n t$, then the following hold:
\begin{enumerate}[i)]
    \item $|\det D_{\omega} \Phi^k(\omega,t,\bdr)| \prod_{j=1}^n |\omega_j| \gtrsim t^{n-1}$; 
    \item $|D_{\omega} \Phi^k(\omega,t,\bdr)^{-1}| \lesssim t^{-1}$. 
\end{enumerate} 
\end{lemma}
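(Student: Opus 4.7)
The plan is to compute $D_\omega \Phi^k$ explicitly, exploit its near block-triangular structure, and reduce to a Schur complement analysis.

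Differentiating the formula
\begin{equation*}
G_j^k = A_j \omega_j \omega_k - \frac{t \tilde d_{k,k}\omega_j}{r_k^2} + \frac{t \tilde d_{k,j}\omega_k}{r_j^2}, \qquad A_j := \frac{1}{r_k^2} - \frac{1}{r_j^2},
\end{equation*}
one sees that $G_j^k$ depends only on $\omega_j$ and $\omega_k$. After reordering indices so that the $k$th one appears last, the Jacobian $D_\omega \Phi^k$ takes the block form
\begin{equation*}
M = \begin{pmatrix} D & v \\ u^{\top} & \omega_k \end{pmatrix},
\end{equation*}
where $D = \mathrm{diag}(d_j)_{j\neq k}$ has diagonal entries $d_j = A_j \omega_k - t \tilde d_{k,k}/r_k^2$, the off-diagonal column is $v_j = A_j \omega_j + t \tilde d_{k,j}/r_j^2$, the off-diagonal row is $u_j = \omega_j$, and the corner entry comes from $\partial_{\omega_k}(F_{0,\bdone}/2) = \omega_k$. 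Two elementary identities will drive the proof:
\begin{equation*}
d_j \omega_j = G_j^k - \frac{t \tilde d_{k,j}\omega_k}{r_j^2}, \qquad v_j \omega_k = G_j^k + \frac{t \tilde d_{k,k}\omega_j}{r_k^2}.
\end{equation*}

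The hypothesis $|\Phi^k| < \bar c_n t$ gives $|F_{0,\bdone}(\omega)| < 2\bar c_n$, so by choosing $\bar c_n$ small one obtains $|\omega| \approx 1$; combined with $\omega \in \Omega^k$, this yields $|\omega_k| \geq (2c_n)^{1/3}$. Substituting $|G_j^k| < \bar c_n t$, $|\tilde d_{k,k}| \leq c_n^2$, $|\tilde d_{k,j}| \geq 1-c_n^2$, and $r_j \in [1/2,2]$ into the two identities above yields the pointwise estimates
\begin{equation*}
|d_j \omega_j| \gtrsim c_n^{1/3} t \qquad \text{and} \qquad |v_j| \lesssim \frac{(\bar c_n + c_n^2)\,t}{|\omega_k|},
\end{equation*}
provided $\bar c_n$ is chosen sufficiently small relative to a power of $c_n$.

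For part (i), I would apply the block determinant formula $\det M = \det D \cdot S$ with Schur complement $S := \omega_k - u^{\top} D^{-1} v$. Each summand in $u^{\top} D^{-1} v = \sum_{j \neq k} \omega_j v_j/d_j$ is bounded by $|\omega_j|^2 \, |v_j|/|d_j \omega_j| \lesssim (\bar c_n + c_n^2)/c_n^{2/3}$, which is much smaller than $|\omega_k|$ once $\bar c_n \ll c_n$; hence $|S| \geq |\omega_k|/2$. Rewriting the target quantity as
\begin{equation*}
|\det D_\omega \Phi^k| \cdot \prod_{j=1}^n |\omega_j| \;=\; |S| \cdot |\omega_k| \cdot \prod_{j \neq k} |d_j \omega_j| \;\gtrsim\; |\omega_k|^2 \cdot \big(c_n^{1/3} t\big)^{n-1} \;\gtrsim\; t^{n-1},
\end{equation*}
with all $c_n$-dependent factors absorbed into the implicit constant, gives (i). For part (ii), I would use the block matrix inversion formula that expresses the four blocks of $M^{-1}$ in terms of $D^{-1}$, $v$, $u$, and $S^{-1}$. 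Using $|d_j| = |d_j \omega_j|/|\omega_j| \gtrsim t$ gives $|D^{-1}| \lesssim 1/t$; similarly $|u^{\top} D^{-1}| \lesssim 1/t$, while $|D^{-1}v|$ and $|S^{-1}|$ are both $O(1)$. Assembling these estimates, every block of $M^{-1}$ has operator norm $O(1/t)$, establishing (ii).

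The main obstacle is the constant chase: the argument requires a careful ordering of the two small parameters, $c_n$ (controlling the excluded set $\Omega^k$) and $\bar c_n$ (the smallness threshold in the hypothesis), essentially $\bar c_n \ll c_n$. Once this is arranged, the two pivotal identities linearise all the relevant bounds, and the remaining computation is a routine block-matrix calculation.
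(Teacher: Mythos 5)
Your proof is correct and follows essentially the same route as the paper: both exploit the arrow structure of $D_\omega\Phi^k$, the two identities $\omega_j\partial_{\omega_j}G_j^k = G_j^k - t\tilde d_{k,j}\omega_k/r_j^2$ and $\omega_k\partial_{\omega_k}G_j^k = G_j^k + t\tilde d_{k,k}\omega_j/r_k^2$, a Schur complement evaluation of the determinant, the lower bound $|\omega_k|^3\geq 2c_n$ from $\Omega^k$, and the constant ordering $\bar c_n\ll c_n$. The only (cosmetic) differences are that the paper peels off the $G_j^k$ terms as a multilinear error matrix and proves part (ii) via the adjugate formula plus Hadamard's inequality, whereas you keep the exact diagonal block and use block inversion.
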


Here $D_{\omega} \Phi^k$ denotes the Jacobian of $\Phi^k$ with respect to the $\omega$ variables; that is,
\begin{equation*}
    D_{\omega} \Phi^k(\omega, t, \bdr) := \big(\nabla_{\omega}G_1^k (\omega, t, \bdr), \dots, \nabla_{\omega}G_n^k (\omega, t, \bdr), \nabla_{\omega}F_{0,\bdone}(\omega)/2 \big), 
\end{equation*}
where $\nabla_{\omega}G_k^k (\omega, t, \bdr)$ is omitted. The statement in part ii) is interpreted as a bound on the $\ell^2$ norm of the components of the inverse matrix $D_{\omega} \Phi^k(\omega,t,\bdr)^{-1}$.

\begin{proof}[Proof of Lemma~\ref{lem: non deg}] Let  $(\omega, t, \bdr) \in X^k$ with $t > 0$ and initially suppose the weaker hypothesis $|\Phi^k(\omega, t, \bdr)| < t$. Fix $1 \leq j \leq n$ with $j \neq k$ and note that 
\begin{subequations}
\begin{equation}\label{eq: transversal 2 a}
    \partial_{\omega_i} G_j^k(\omega, t, \bdr) = 0 \qquad \textrm{unless $i \in \{j, k\}$.}
\end{equation}
On the other hand, 
\begin{align*}
    \partial_{\omega_j} G_j^k(\omega, t, \bdr) &= 
        \Big(\frac{1}{r_k^2} - \frac{1}{r_j^2}\Big)\omega_k - \frac{t \tilde{d}_{k,k}}{r_k^2}, \\
        \partial_{\omega_k} G_j^k(\omega, t, \bdr) &= 
        \Big(\frac{1}{r_k^2} - \frac{1}{r_j^2}\Big)\omega_j + \frac{t \tilde{d}_{k,j}}{r_j^2},
\end{align*}
so that 
\begin{align}
\label{eq: transversal 2 b}
  \omega_j\partial_{\omega_j} G_j^k(\omega, t, \bdr) &= 
        G_j^k(\omega, t, \bdr) -  \frac{t \tilde{d}_{k,j}\omega_k}{r_j^2},  \\
\label{eq: transversal 2 c}
\omega_k\partial_{\omega_k} G_j^k(\omega, t, \bdr) &= 
        G_j^k(\omega, t, \bdr) + \frac{t \tilde{d}_{k,k}\omega_j}{r_k^2}.    
\end{align}
\end{subequations}
Using these formul\ae, we may write
\begin{equation*}
    \Big(\prod_{j=1}^n r_j^2 \omega_j \Big) \det D_{\omega} \Phi^k(\omega, t, \bdr) = \det \big[A^k(\omega, t,\bdr) +  B^k(\omega, t,\bdr)\big]
\end{equation*}
where $A^k(\omega, t,\bdr)$ is given by
\begin{equation*}
\begin{bmatrix}
       -  t \tilde{d}_{k,1}\omega_k & \cdots & 0 & t\tilde{d}_{k,k} \omega_1 & 0 & \cdots & 0 \\
      \vdots & \ddots & \vdots & \vdots & \vdots & \ddots & \vdots  \\
0 & \cdots & -  t\tilde{d}_{k,k-1}\omega_k & t\tilde{d}_{k,k} \omega_{k-1} & 0 & \cdots & 0 \\
0 & \cdots & 0 & t\tilde{d}_{k,k} \omega_{k+1} & -  t\tilde{d}_{k,k+1}\omega_k &  \cdots & 0  \\
\vdots & \ddots & \vdots & \vdots & \vdots & \ddots & \vdots \\
0 & \cdots & 0 & t\tilde{d}_{k,k} \omega_n & 0 & \cdots & -t\tilde{d}_{k,n}\omega_k \\
r_1^2\omega_1^2 & \cdots & r_{k-1}^2\omega_{k-1}^2 &  r_k^2\omega_k^2 & r_{k+1}^2\omega_{k+1}^2  &  \cdots  & r_n^2\omega_n^2
    \end{bmatrix}
\end{equation*}
and, writing $G_j^k$ for $G_j^k(\omega, t, \bdr)$, the matrix $B^k(\omega, t,\bdr)$ is given by
\begin{equation*}
\begin{bmatrix}
       r_1^2G_1^k & \cdots & 0 & r_k^2G_1^k& 0 & \cdots & 0  \\
      \vdots & \ddots & \vdots & \vdots & \vdots & \ddots & \vdots \\
      0 & \cdots & r_{k-1}^2G_{k-1}^k & r_k^2G_{k-1}^k& 0 & \cdots & 0 & \\
      0 & \cdots & 0 & 
  r_k^2G_{k+1}^k & r_{k+1}^2G_{k+1}^k & \cdots & 0  \\
      \vdots & \ddots & \vdots & \vdots & \vdots & \ddots & \vdots  \\
      0 & \cdots & 0 & r_k^2G_n^k &0 & \cdots & r_n^2G_n^k \\
    0 & \cdots & 0 & 0 &0 & \cdots & 0 
    \end{bmatrix}.
\end{equation*}
Thus, by the multi-linearity of the determinant and the hypothesis $|\Phi^k(\omega, t, \bdr)| < t$, we deduce that 
\begin{equation}\label{eq: transversal 3}
    \Big(\prod_{j=1}^n r_j^2 \omega_j \Big) \det D_{\omega} \Phi^k(\omega, t, \bdr) = \det A^k(\omega, t,\bdr) + \cE^k(\omega, t, \bdr). 
\end{equation}
where the function $\cE^k \colon X^k \to \R$ satisfies
\begin{equation}\label{eq: bdCEk}
  |\cE^k(\omega, t, \bdr)| \lesssim t^{n-2}|\Phi^k(\omega, t, \bdr)| \qquad \textrm{for all $(\omega, t, \bdr) \in X^k$.}
\end{equation}

Recall that, given $\ell, m \in \N$ and $W \in \mathrm{Mat}(\bbF, \ell \times \ell)$, $X \in \mathrm{Mat}(\bbF, \ell \times m)$, $Y \in \mathrm{Mat}(\bbF,  m \times \ell)$ and $Z \in \mathrm{Mat}(\bbF, m \times m)$ matrices over a field $\bbF$ with $\det W \neq 0$, we have the Schur complement identity\footnote{This follows by factoring 
    \begin{equation*}
  \begin{bmatrix} 
 W & X \\
 Y & Z 
\end{bmatrix} = 
\begin{bmatrix} 
I & 0 \\
YW^{-1} & I \end{bmatrix}
\begin{bmatrix} 
W & 0 \\ 
0 & Z - YW^{-1}X 
\end{bmatrix}
\begin{bmatrix} 
I & W^{-1}X \\ 
0 & I 
\end{bmatrix}.  
\end{equation*}}
\begin{equation*}
    \det
    \begin{bmatrix}
        W & X \\
        Y & Z
    \end{bmatrix}
    = \det W \det (Z - Y W^{-1} X).
\end{equation*}
Rearranging the columns of $A^k(\omega, t, \bdr)$ and taking $Z = (r_k^2\omega_k^2)$ to be a $1 \times 1$ matrix, we can use this formula to deduce that
\begin{equation*}
    \det A^k(\omega, t,\bdr) = (-1)^{k-1} t^{n-1} \omega_k^{n-2} \sum_{j=1}^n \Big(\prod_{\substack{1 \leq i \leq n \\ i \neq j}} \tilde{d}_{k,i}\Big) r_j^2 \omega_j^3.
\end{equation*}
By our choice of vectors $d_k$, as given in \eqref{eq: dirs}, we have 
\begin{equation*}
   \prod_{\substack{1 \leq i \leq n \\ i \neq j}} d_{k,i} = 
   \begin{cases}
       1 & \textrm{if $j = k$,} \\
       0 & \textrm{otherwise.}
   \end{cases}
\end{equation*}
Recall that $|\tilde{d}_k - d_k| < c_n^2$, whilst the hypothesis $|\Phi^k(\omega, t, \bdr)| < t$ implies $|F_{0, \bdone}(\omega)| \leq 2t$ which in turn implies $|\omega| \lesssim 1$. Thus, we may write
\begin{equation*}
    \det A^k(\omega, t,\bdr) = (-1)^{k-1} t^{n-1} \omega_k^{n-2} \big( r_k^2\omega_k^3 + \cE^k(\omega; \bdr)\big)
\end{equation*}
where $|\cE^k(\omega; \bdr)| \lesssim  c_n^2$. Since $(\omega, t, \bdr) \in X^k$, we have $|\omega_k|^3 \geq 2c_n$. Consequently, provided $c_n > 0$ is chosen sufficiently small depending only on $n$, we may ensure that $|r_k^2 \omega_k^3 + \cE^k(\omega; \bdr)| \geq c_n$ and therefore $|\det A^k(\omega, t,\bdr)| \gtrsim t^{n-1}$. Comparing this with \eqref{eq: transversal 3} and using \eqref{eq: bdCEk}, we conclude that if $\bar c_n$ is selected appropriately, then the hypothesis that $|\Phi^k(\omega, t, \bdr)| < \bar{c}_n t$ leads to the conclusion of part i).\medskip

Turning to part ii), for $1 \leq \alpha, \beta \leq n$ let $M_{\alpha, \beta}^k(\omega, t, \bdr)$ denote the $(n-1) \times (n-1)$ submatrix formed by removing row $\alpha$ and column $\beta$ from $D_{\omega} \Phi^k(\omega, t, \bdr)$. In light of the result from part i) and the adjugate matrix formula for the inverse, to bound the components of the inverse matrix it suffices to show 
\begin{equation}\label{eq: transversal 7}
   |\det M_{\alpha, \beta}^k(\omega, t, \bdr)| \prod_{j=1}^n |\omega_j| \lesssim  t^{n-2} \qquad \textrm{for all $1 \leq \alpha, \beta \leq n$.}
\end{equation}
Note that $\omega_j \neq 0$ for $1 \leq j \leq n$ as a consequence of part i),  and so we are able to divide out the quantity $\prod_{j=1}^n |\omega_j|$.

Fixing the indices $\alpha$, $\beta$ and letting $v_j(\omega, t, \bdr)$ denote the $j$th row of $ M_{\alpha, \beta}^k(\omega, t, \bdr)$ for $1 \leq j \leq n-1$, it follows from \eqref{eq: transversal 2 a}, \eqref{eq: transversal 2 b} and \eqref{eq: transversal 2 c}, together with the hypothesis $|\Phi^k(\omega, t, \bdr)| < ct$, that 
\begin{equation*}
    |v_j(\omega, t, \bdr)||\omega_j| \lesssim t \quad \textrm{for all $1 \leq j \leq n-2$} \quad \textrm{and} \quad  |v_{n-1}(\omega, t, \bdr)| \lesssim 1. 
\end{equation*}
Thus, we obtain \eqref{eq: transversal 7} as a consequence of, say, Hadamard's inequality. 
\end{proof}

Given $(t, \bdr) \in (0,2] \times [1/2,2]^n$ and $\rho > 0$, consider the set 
\begin{equation}\label{eq: good omega set 2}
\Omega^k(t, \bdr; \rho) := \big\{ \omega \in \R^n : (\omega, t, \bdr) \in X^k \textrm{ and } |\Phi(\omega, t, \bdr)| < \rho \big\}.
\end{equation}

\begin{lemma}\label{lem: comp diam} Let $1 \leq k \leq n$, $(t, \bdr) \in (0,2] \times [1/2,2]^n$ and $0 < \rho < \bar{c}_n t$ where $\bar{c}_n >0$ is the constant appearing in Lemma~\ref{lem: non deg}. There exists some $M \in \N$ with $M = O(1)$ such that
\begin{equation}\label{eq: comp diam}
    \Omega^k(t, \bdr; \rho) = \bigcup_{m = 1}^M \Omega_m^k(t, \bdr; \rho) \cup N
\end{equation}
where the $\Omega_m^k(t, \bdr; \rho)$ are measurable sets satisfying  $\diam \Omega_m^k(t, \bdr; \rho) \lesssim \rho/t$ and $N \subseteq \R^n$ is Lebesgue null.
\end{lemma}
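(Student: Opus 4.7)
The plan is to build the decomposition from the zeros of $\varphi := \Phi^k(\cdot, t, \bdr)$ in $\Omega^k$, exploiting the quantitative non-degeneracy of $D_\omega\varphi$ from Lemma~\ref{lem: non deg}. Set $Z := \{\omega \in \Omega^k : \varphi(\omega) = 0\}$. Each component of $\Phi^k$ in \eqref{eq: defn Phi k} is a polynomial in $\omega$ of degree at most $2$, and Lemma~\ref{lem: non deg}(i) forces every zero of $\varphi$ in $\Omega^k$ to be isolated (since the Jacobian determinant is nonzero there). Bezout's theorem then gives $\#Z \leq 2^n =: M$, and I will index the pieces $\Omega_m^k(t, \bdr; \rho)$ by the elements of $Z$.

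For each $\omega_0 \in Z$, I construct $\Omega_m^k(t, \bdr; \rho)$ by a path-lifting argument. Given $y \in B(0, \rho)$, consider the ODE $\dot\omega(s) = D_\omega\varphi(\omega(s), t, \bdr)^{-1} y$ with $\omega(0) = \omega_0$, run on $s \in [0, 1]$. Along any solution, $\varphi(\omega(s)) = sy$, so the trajectory stays in $\{|\varphi| < \rho\} \subseteq \{|\varphi| < \bar{c}_n t\} \cap X^k$, the region where Lemma~\ref{lem: non deg}(ii) supplies the uniform bound $\|D_\omega\varphi(\omega, t, \bdr)^{-1}\| \lesssim 1/t$. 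This, together with $|y| \leq \rho$, yields $|\dot\omega(s)| \lesssim \rho/t$ and hence $|\omega(1) - \omega_0| \lesssim \rho/t$. Taking $\Omega_m^k(t, \bdr; \rho)$ to be the image of $y \mapsto \omega(1)$ as $y$ ranges over $B(0, \rho)$ gives an open subset of $\Omega^k(t, \bdr; \rho)$ of diameter $\lesssim \rho/t$, on which $\varphi$ is a diffeomorphism onto $B(0, \rho)$.

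It remains to prove that $\Omega^k(t, \bdr; \rho) \setminus \bigcup_{m=1}^M \Omega_m^k(t, \bdr; \rho)$ is contained in a Lebesgue null set $N$. For $\omega \in \Omega^k(t, \bdr; \rho)$, run the time-reversed ODE $\dot\alpha(s) = -D_\omega\varphi(\alpha(s), t, \bdr)^{-1} y$ with $y := \varphi(\omega)$ and $\alpha(0) = \omega$. Along this trajectory $\varphi(\alpha(s)) = (1-s) y$, and if $\alpha$ remains in $\Omega^k$ throughout $s \in [0, 1]$ then $\alpha(1) \in Z$ and $\omega$ lies in the corresponding $\Omega_m^k(t, \bdr; \rho)$. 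The exceptional $\omega$ are those whose backward trajectory first exits $\Omega^k$ through the smooth algebraic hypersurface $\partial\Omega^k = \{|\omega_k|^3 = 2c_n\}$. By smooth dependence of ODE solutions on initial conditions and the fact that $\partial\Omega^k$ is a smooth codimension-$1$ surface, stratifying by the exit time $s_0 \in (0, 1]$ shows that the exceptional set is contained in a countable union of smooth codimension-$1$ hypersurfaces, and is therefore Lebesgue null.

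I expect the final null-set claim to be the main obstacle, since one must check that varying the exit time does not accumulate an $n$-dimensional family of exceptional initial conditions. A cleaner alternative is to take the $\Omega_m^k(t, \bdr; \rho)$ to be the connected components of $\Omega^k(t, \bdr; \rho)$: the set is semialgebraic of bounded complexity (defined by $O(1)$ polynomial inequalities each of degree at most $2$), so the Milnor--Thom theorem bounds the number of components by a dimension-dependent constant, and the uniform bound $\|D_\omega\varphi^{-1}\| \lesssim 1/t$ combined with path-lifting inside each component yields the diameter bound; any "pinched" components meeting $\partial\Omega^k$ without enclosing a zero are absorbed into $N$. In either approach, choosing $\bar{c}_n$ sufficiently small relative to the dimensional constants from Lemma~\ref{lem: non deg} is essential to ensure that the inverse function theorem applies throughout the full range $\rho < \bar{c}_n t$, rather than being limited to the naive Newton--Kantorovich range $\rho \lesssim t^2$.
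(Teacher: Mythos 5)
Your core idea --- flowing along the Newton-type ODE $\dot\omega(s)=\pm D_{\omega}\Phi^k(\omega(s),t,\bdr)^{-1}y$ to connect an arbitrary point of $\Omega^k(t,\bdr;\rho)$ to a nondegenerate zero of $\Phi^k(\cdot,t,\bdr)$, with B\'ezout bounding the number of such zeros by $2^n$ --- is genuinely different from the paper's route (which combines an effective inverse function theorem with Paw\l{}ucki's regular $L$-cell decomposition to produce quantitatively path-connected semialgebraic pieces), and with care it can be made to work. As written, however, it has two genuine gaps. First, the asserted containment $\{|\varphi|<\rho\}\subseteq\{|\varphi|<\bar c_n t\}\cap X^k$ is false: membership in $X^k$ requires $\omega\in\Omega^k$, i.e.\ $|\omega_k|^3\geq 2c_n$, and the sublevel set $\{|\varphi|<\rho\}$ need not lie in $\Omega^k$. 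Hence neither the forward nor the backward trajectory is known to stay in the region where Lemma~\ref{lem: non deg} controls $D_{\omega}\Phi^k(\cdot,t,\bdr)^{-1}$, and once it leaves $\Omega^k$ the Jacobian may degenerate and the ODE cease to be well-posed. The repair is quantitative, not generic: note that Lemma~\ref{lem: non deg} persists on a slightly enlarged region such as $\{|\omega_k|^3\geq c_n\}$, and use the speed bound $|\dot\omega|\lesssim\rho/t<C\bar c_n$ to see that a trajectory launched from $\Omega^k$ travels total distance $O(\bar c_n)\ll c_n^{1/3}$ and therefore never reaches $\{|\omega_k|^3=c_n\}$, after shrinking $\bar c_n$ relative to $c_n$. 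With that in place every backward trajectory terminates at a zero and the exceptional set is empty.

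Second, and more seriously, your claim that the set of $\omega$ whose backward trajectory exits $\Omega^k$ is Lebesgue null is not correct as argued. The set of initial conditions whose trajectory meets a fixed hypersurface at \emph{some} time in $(0,1]$ is typically open and full-dimensional; the exit time varies continuously, so ``stratifying by exit time'' yields an uncountable union of codimension-one slices, which can certainly have positive measure. Concretely, if the zero toward which a component of $\{|\varphi|<\rho\}$ flows lies just outside $\Omega^k$, that entire positive-measure component is exceptional for your construction; the lemma is still true there (the component has diameter $O(\rho/t)$), but your proof does not see it. The ``cleaner alternative'' has a related gap: Milnor--Thom bounds the \emph{number} of connected components of $\Omega^k(t,\bdr;\rho)$ but says nothing about their \emph{diameters}. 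Mere connectedness gives no control on the length of a path joining two points of a component, and the expansion bound $|D\varphi(v)|\gtrsim t|v|$ converts domain path length into image path length, not into image displacement, so a long, thin component winding inside $\{|\varphi|<\rho\}$ is not excluded by this reasoning alone. Producing a connecting path whose image has length $O(\rho)$ is precisely the role of the regular $L$-cell decomposition (Lemma~\ref{lem: L cell decomp}) in the paper's proof, and your sketch supplies no substitute for it.
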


For the proof of Lemma~\ref{lem: comp diam}, it is convenient to appeal to the following algebraic variant of the inverse function theorem. 

\begin{lemma}[{Effective inverse function theorem \cite[Lemma 3.2]{CRW2003}}]\label{lem: effective inverse} For $1 \leq j \leq n$, let $P_j \in \R[X_1, \dots, X_n]$  and consider the mapping $P = (P_1, \dots, P_n) \colon \R^n \to \R^n$. Suppose that the Jacobian determinant $\det D P$ is not identically zero. Then there exists a number $1 \leq M \leq M'$ where $M' := \prod_{j=1}^n \deg P_j$ and a decomposition
\begin{equation*}
    \R^n = N \sqcup \bigsqcup_{m=1}^M U_m
\end{equation*}
with the following properties:
\begin{enumerate}[i)]
    \item For $1\leq m\leq M$, the restriction of $P$ to $U_m$ is a diffeomorphism onto its image;
    \item $N$ is the zero-set of a polynomial of degree $O_{M'}(1)$;
    \item The sets $U_1,\ldots,U_M$ are unions of connected components of $\R^n\backslash N$.
    \end{enumerate}
\end{lemma}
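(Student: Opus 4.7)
The plan is to apply the effective inverse function theorem (Lemma~\ref{lem: effective inverse}) to the polynomial map $\Psi \colon \R^n \to \R^n$, $\Psi(\omega) := \Phi^k(\omega, t, \bdr)$, with $(t,\bdr)$ held fixed. Each component of $\Psi$ has degree at most $2$ in $\omega$, so $M' := \prod_j \deg \Psi_j \leq 2^n = O(1)$; the Jacobian $\det D_\omega \Psi$ is not identically zero, thanks to Lemma~\ref{lem: non deg}(i), which produces points at which $|\det D_\omega \Psi| \gtrsim t^{n-1}/\prod_j|\omega_j|$. Lemma~\ref{lem: effective inverse} then yields a decomposition $\R^n = N \sqcup \bigsqcup_{j=1}^{M_0} U_j$ with $M_0 = O(1)$, where $N$ is the zero set of a polynomial of degree $O(1)$ and each $\Psi|_{U_j}$ is a global diffeomorphism onto $\Psi(U_j)$.

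Next I would define the pieces $\Omega_m^k(t,\bdr;\rho)$ to be the connected components of $\Omega^k(t,\bdr;\rho) \cap U_j$ as $j$ ranges over $1, \dots, M_0$. Because $\Omega^k(t,\bdr;\rho)$ is a semi-algebraic set defined by $O(1)$ polynomial inequalities of bounded degree, the Milnor--Thom theorem bounds the total number of components by $M = O(1)$. The residual set $\Omega^k(t,\bdr;\rho) \cap N$ is Lebesgue null as a subset of a proper algebraic subvariety of $\R^n$, playing the role of $N$ in the statement.

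The bulk of the work lies in showing $\diam \Omega_m^k(t,\bdr;\rho) \lesssim \rho/t$ for each piece $W$. Since $W \subseteq U_j$ with $\Psi|_{U_j}$ a diffeomorphism, $\Psi$ identifies $W$ with an open connected set $C := \Psi(W) \subseteq B(0,\rho) \cap \Psi(U_j)$. The hypothesis $\rho < \bar{c}_n t$ places $C$ inside the regime covered by Lemma~\ref{lem: non deg}(ii), so $|D(\Psi|_{U_j})^{-1}(y)| \lesssim t^{-1}$ for every $y \in C$. For any $\omega_1, \omega_2 \in W$, lifting a rectifiable path $\sigma$ in $C$ from $\Psi(\omega_1)$ to $\Psi(\omega_2)$ under $(\Psi|_{U_j})^{-1}$ produces a path in $W$ of length at most a dimensional constant times $t^{-1}\ell(\sigma)$, giving
\begin{equation*}
|\omega_1 - \omega_2| \;\lesssim\; t^{-1}\, \mathrm{dist}_C(\Psi(\omega_1), \Psi(\omega_2)).
\end{equation*}

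The main obstacle is then the tameness estimate $\mathrm{dist}_C(y_1, y_2) \lesssim \rho$ for all $y_1, y_2 \in C$, where $\mathrm{dist}_C$ denotes intrinsic path distance. Although $C \subseteq B(0,\rho)$ trivially has Euclidean diameter at most $2\rho$, its intrinsic diameter could a priori be much larger, and I expect this to be the trickiest step. One route is to exploit the bounded semi-algebraic complexity of $C$: the set $\Psi(U_j)$ arises as a union of cells in the cylindrical algebraic decomposition underpinning Lemma~\ref{lem: effective inverse}, so $B(0,\rho) \cap \Psi(U_j)$ has $O(1)$ connected components each of which is tame in the sense that its intrinsic and Euclidean diameters are comparable up to a dimensional constant. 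Alternatively, one can further refine the decomposition into $O(1)$ pieces each contained in a convex neighbourhood of a basepoint, whereupon the classical inverse function theorem combined with Lemma~\ref{lem: non deg} delivers the Euclidean diameter bound directly and sidesteps the intrinsic-distance issue altogether.
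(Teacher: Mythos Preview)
Your proposal is not a proof of the stated Lemma~\ref{lem: effective inverse} at all; that lemma is quoted from \cite{CRW2003} and the paper does not prove it. What you have written is an argument for Lemma~\ref{lem: comp diam}, which \emph{uses} Lemma~\ref{lem: effective inverse} as input. I will compare your proposal to the paper's proof of Lemma~\ref{lem: comp diam}.

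The overall structure matches: apply the effective inverse function theorem to $\omega\mapsto\Phi^k(\omega,t,\bdr)$, intersect with $\Omega^k(t,\bdr;\rho)$, and use Lemma~\ref{lem: non deg}~ii) to convert a path-length bound in the image into the desired diameter bound in the domain. You also correctly isolate the crux: controlling the \emph{intrinsic} diameter of the image piece $C\subseteq B(0,\rho)$ by $O(\rho)$.

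The gap is in how you propose to close this step. Neither suggested route works as stated. Cells in a cylindrical algebraic decomposition (which is what underlies \cite[Lemma 3.2]{CRW2003}) need not have comparable intrinsic and Euclidean diameters; CAD cells are only homeomorphic to balls and can have cusps, so ``tame'' in your sense does not follow from bounded complexity alone. Your second route, refining into pieces each contained in a convex neighbourhood, is circular: producing such a refinement with $O(1)$ pieces is essentially the statement you are trying to prove. The paper handles this precisely by invoking Paw{\l}ucki's regular $L$-cell decomposition (stated as Lemma~\ref{lem: L cell decomp}, from \cite{Pawlucki}; see also Kurdyka \cite{Kurdyka}), applied on the \emph{image} side to each semi-algebraic set $\Phi^k_{t,\bdr}(\Omega^k(t,\bdr;\rho)\cap U_m)$. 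The defining property of a regular $L$-cell is exactly that any two points $w,z$ are joined by a piecewise $C^1$ curve inside the cell of length $O_L(|w-z|)$; this is the missing ingredient that converts the Euclidean bound $|w-z|<2\rho$ into an intrinsic bound, after which your path-lifting argument via Lemma~\ref{lem: non deg}~ii) goes through verbatim.
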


Note that Conclusions ii) and iii) are not stated explicitly in \cite[Lemma 3.2]{CRW2003}, but these are an immediate consequence of the proof in \cite{CRW2003}. Lemma~\ref{lem: effective inverse} is a quantitative version of the inverse function theorem, since it provides control over the number of open sets $M$. 

Roughly speaking, the sets $\Omega_m^k$ from Lemma \ref{lem: comp diam} will be constructed by intersecting $\Omega^k(t, \bdr; \rho)$ with the sets $U_m$ output from Lemma \ref{lem: effective inverse} with $P(\omega) = \Phi^k(\omega, t,\bdr)$ as defined in \eqref{eq: defn Phi k}. However, the sets constructed in this fashion might fail to be connected. We will fix this by recalling a result which says that after further sub-dividing the sets if necessary, we can ensure that these sets are path connected in a certain quantitative sense: the intrinsic and extrinsic metrics on each set are Lipschitz equivalent. We will first recall several definitions.

\begin{definition}
Let $M\geq 1$. A set $X \subset\mathbb{R}^n$ is called a \emph{semi-algebraic set} of complexity at most $M$ if there exists an integer $N\leq M$; polynomials $P_1,\ldots,P_N$, each of degree at most $M$; and a Boolean formula $\Psi\colon \{0,1\}^N\to\{0,1\}$ such that
\[
X = \big\{x\in\mathbb{R}^n \colon \Psi\big( P_1(x)\geq 0, \ldots, P_N(x)\geq 0 \big)=1\big\}.
\]
\end{definition}
We refer the reader to the textbooks by Bochnak, Coste and Roy \cite{BCR} and Basu, Pollack and Roy \cite{BPR} for an introduction to this topic.

A theorem of Milnor and Thom \cite{Milnor, Thom} says that if $P\colon\R^n\to\R$ is a polynomial of degree $M$, then the open set $\R^n\backslash\{P=0\}$ is a union of at most $M(2M-1)^{n-1}$ (Euclidean) connected components. In particular, each of the open sets $U_1,\ldots,U_M$ from Lemma \ref{lem: effective inverse} is a union of $O_{M'}(1)$ connected components of the open set $\R^n\backslash N$, where $M':= \prod_{j=1}^n \deg P_j$.

A theorem of \L{}ojasiewicz (see, for instance, \cite[Theorem 5.22]{BPR}) says that if $X\subseteq \mathbb{R}^n$ is a semi-algebraic set of complexity at most $M$, then each Euclidean connected component of $X$ is semi-algebraic of complexity $O_{M}(1)$. Note that \cite[Theorem 5.22]{BPR} does not explicitly bound the complexity of each Euclidean connected component, but the proof is algorithmic and yields a quantitative bound. Since the set $\R^n\backslash N$ from Lemma \ref{lem: effective inverse} is semi-algebraic of complexity $O_{M'}(1)$, and since each set $U_m$ from Lemma \ref{lem: effective inverse} is a union of $O_{M'}(1)$ connected components of $\R^n\backslash N$, we conclude that each set $U_m$ is semi-algebraic of complexity $O_{M'}(1)$.

It follows from the Tarski--Seidenberg theorem (see \cite[\S 2.2]{BCR}) that if $U\subseteq\mathbb{R}^n$ is semi-algebraic of complexity at most $M$, and if $\Phi=(\Phi_1,\ldots,\Phi_n)\colon\mathbb{R}^n\to\mathbb{R}^n$ with each $\Phi_i$ a polynomial of degree at most $M$, then both $\Phi(U)$ and $\Phi^{-1}(U)$ are semi-algebraic of complexity $O_M(1)$. In particular, since $\Phi^k$ is a tuple of polynomials, each of which have degree $O(1)$, we conclude that for $t,\bdr$ fixed, the sets $\Phi^k(U_m,t,\bdr)$ are semi-algebraic of complexity $O_{M'}(1)$ for $1 \leq m \leq M$.

Next, we recall \cite[Theorem 3$_n$, p.1047]{Pawlucki} (see also \cite[Corollary B]{Kurdyka}). The result below makes reference to a \textit{regular $L$-cell}. Informally, the definition is as follows. A one-dimensional regular $L$-cell is an interval. An $n$-dimensional regular $L$-cell is a set of points in $\R^n$ whose final coordinate is bounded above and below by the graphs of two functions whose partial derivatives have magnitude at most $L$; these two functions have a common domain, which is required to be a regular $L$-cell of dimension $n-1$.

\begin{lemma}\label{lem: L cell decomp}
Let $X \subset\mathbb{R}^n$ be open and semi-algebraic. Then we can decompose $X = N \sqcup \bigsqcup_{m=1}^M U_m$, where
\begin{enumerate}[i)]
    \item Each set $U_m$ is a regular $L$-cell, for some $L = O(1)$;
    \item Each set $U_m$ is semi-algebraic. Both $M$ and the complexity of each cell are bounded by a constant that depends only on $n$ and the complexity of $X$;
    \item $N$ is semi-algebraic and nowhere dense. In particular, $|N| = 0$.
\end{enumerate}
\end{lemma}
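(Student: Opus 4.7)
The plan is to combine cylindrical algebraic decomposition with the Lipschitz cell decomposition of Pawlucki and Kurdyka, which is essentially what the cited references \cite{Pawlucki, Kurdyka} already provide. I would proceed in three stages.

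First, I would apply cylindrical algebraic decomposition (CAD) to the open semi-algebraic set $X$ to obtain a finite partition into semi-algebraic cells, with both the number of cells and their complexities bounded in terms of $n$ and the complexity of $X$ (see, for instance, \cite[Chapter 5]{BPR}). All cells of dimension strictly less than $n$ are absorbed into the exceptional set $N$. Their union is semi-algebraic and contained in the zero set of a non-trivial polynomial of complexity $O(1)$, hence nowhere dense and Lebesgue null, giving conclusion iii). The remaining $n$-dimensional CAD cells are open and semi-algebraic.

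Second, for each of these $n$-dimensional cells $C$, I would invoke \cite[Theorem 3$_n$, p.1047]{Pawlucki} (equivalently \cite[Corollary B]{Kurdyka}), which asserts that any open semi-algebraic subset of $\R^n$ admits a decomposition into finitely many regular $L$-cells with $L = O(1)$, together with a lower-dimensional semi-algebraic remainder. Concatenating these refinements over all $C$ and merging the lower-dimensional remainders into $N$ produces the decomposition claimed in i). Since CAD yields cells of complexity bounded in terms of $n$ and the complexity of $X$, and the Pawlucki--Kurdyka refinement further subdivides each cell into $O(1)$ sub-cells of complexity bounded by a constant depending only on the input complexity, conclusion ii) follows by composing these bounds.

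The main obstacle is securing the uniform Lipschitz constant $L = O(1)$ for every cell. A naive CAD produces cells bounded by semi-algebraic graphs whose partial derivatives can blow up along the cell boundary, so the intrinsic and extrinsic metrics on such a cell need not be comparable. The Pawlucki--Kurdyka construction overcomes this by inductively choosing generic coordinate systems: on a suitable open sub-cell one ensures that the graphs bounding the cell in the final coordinate have partial derivatives controlled by a dimensional constant, while the ``bad'' locus where no such choice works forms a lower-dimensional semi-algebraic set that can be safely absorbed into $N$. I would invoke this construction as a black box rather than reproduce its inductive machinery. Once the $L$-cell structure is in place, the equivalence of the intrinsic and extrinsic metrics on each $U_m$ is a direct consequence of a telescoping argument across the $n$ coordinate directions, using only the uniform bound on the gradients of the bounding graphs.
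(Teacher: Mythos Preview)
Your proposal is correct and ultimately rests on the same black-box citation as the paper: the result is precisely \cite[Theorem~3$_n$]{Pawlucki} (or \cite[Corollary~B]{Kurdyka}), and the paper does not prove it independently but simply invokes it, noting that Conclusion~ii) follows from the explicit algorithmic nature of Pawlucki's construction. Your preliminary CAD step is harmless but unnecessary, since Pawlucki's theorem already applies directly to any open semi-algebraic set; you can drop stage one and go straight to stage two.
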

We remark that the results in \cite{Pawlucki} are stated in the slightly more general language of o-minimal structures, while we have stated the result in the special case where $X$ is a semi-algebraic set. Furthermore, Conclusion ii) above is not stated explicitly, but follows readily from the proof in \cite{Pawlucki}, which is explicit and algorithmic (the sets $U_m$ are constructed by cutting the set $X$ into pieces according to the vanishing locus of various ancillary polynomials obtained from the polynomials that define the set $X$; this procedure is explicitly described).

Finally, \cite[Proposition 1 (1), p. 1046]{Pawlucki} says that if $L>0$ and $U\subseteq \mathbb{R}^n$ is a regular $L$-cell, then for every pair of points $w,z\in U$, there is a continuous and piecewise $C^1$ curve $\gamma\colon[0, |w-z|]\to U$ with $\gamma(0)=w$, $\gamma(|w-z|) = z$, and $|\gamma'(s)|=O(L^{n-1})$ at each point for which $\gamma'(s)$ is defined.\footnote{Proposition 1 (1) from \cite{Pawlucki} says that the curve is continuous and definable. This in turn means that it is continuous and piecewise $C^1$, by \cite[Chapter 7, Proposition 2.5]{VdDries}.} This allows us to replace Conclusion i) from Lemma  \ref{lem: L cell decomp} with the following:
\begin{enumerate}
    \item[\emph{i$'$)}] For each set $U_m$ and each $w,z\in U_m$, there is a continuous, piecewise $C^1$ curve $\gamma\colon [0, |u-z|]\to U_m$ with $\gamma(0)=w$, $\gamma(|w-z|) = z$, and $|\gamma'(s)|=O(L^{n-1})$ at each point for which $\gamma'(s)$ is defined (in particular, $|\gamma'(s)|=O(L^{n-1})$ for all but finitely many values of $s$).
\end{enumerate}

\begin{proof}[Proof of Lemma~\ref{lem: comp diam}]  Fixing $(t, \bdr) \in (0,2] \times [1/2,2]^n$, we apply Lemma~\ref{lem: effective inverse} to the polynomial mapping $\Phi^k_{t, \bdr} \colon \R^n \to \R^n$ given by  $\Phi_{t, \bdr}^k(\omega) := \Phi^k(\omega, t, \bdr)$ for all $\omega \in \R^n$. This implies there exists some $M' \in \N$ with $M' = O(1)$ and open sets $U_1, \dots, U_{M'}$ such that
\begin{enumerate}[i)]
    \item The $\{U_m\}_{m=1}^{M'}$ are pairwise disjoint and cover $\R^n$ except for a null set;
    \item The restriction $\Phi_{t, \bdr}^k|_{U_m}$ of $\Phi_{t, \bdr}^k$ to $U_m$ is a diffeomorphism onto its image for $1 \leq m \leq M'$. 
\end{enumerate}

Now let $0 < \rho < 1$ and apply Lemma \ref{lem: L cell decomp} to each of the semi-algebraic sets $\Phi_{t, \bdr}^k( \Omega^k(t, \bdr; \rho) \cap U_m)$, $1 \leq m \leq M'$. Each of these sets has complexity $O(1)$. This yields a collection of semi-algebraic sets $\{V_{m,j}\}_{j, m = 1}^{M''}$ for some $M'' = O(1)$. After re-indexing (and redefining $M$), we denote the collection of semi-algebraic sets $\{\Phi_{t, \bdr}^k|_{U_m}^{-1}(V_{m,j})\}_{j, m = 1}^{M''}$ by $\{\Omega_m^k(t, \bdr; \rho)\}_{m=1}^M$. We have $M = O(1)$, and since each function $\Phi_{t, \bdr}^k|_{U_m}$ is a diffeomorphism, we have that \eqref{eq: comp diam} (that is, the assertion that $|N|=0$) follows from Conclusion i) of Lemma \ref{lem: effective inverse} and Conclusion iii) of Lemma \ref{lem: L cell decomp}. For each $1 \leq m \leq M$, let $\Phi_{m,t, \bdr}^k$ denote the restriction of $\Phi_{t, \bdr}^k$ to $\Omega_m^k(t, \bdr; \rho)$.

It remains to show that the sets $\Omega_m^k(t, \bdr; \rho)$ satisfy the diameter condition. Let $\Omega_m^k(t, \bdr; \rho)$ be non-empty, let $\omega$, $\xi \in \Omega_m^k(t, \bdr; \rho)$ be arbitrary points, and let $w := \Phi^k(\omega, t, \bdr)$, $z := \Phi^k(\xi, t, \bdr)$. Since $\Omega_m^k(t, \bdr; \rho) \subseteq \Omega^k(t,\bdr, \rho)$, it follows from the definitions that $|w| < \rho$ and $|z| < \rho$. By Lemma \ref{lem: L cell decomp} Conclusion i$'$), there is a continuous, piecewise $C^1$ curve $\gamma\colon[0, |w-z|]\to \Phi_{m,t, \bdr}^k(\Omega_m^k(t, \bdr; \rho))$ with $\gamma(0)=w,$ $\gamma(|w-z|) = z$, and $|\gamma'(s)| = O(1)$ for all but finitely many values of $s$. By the fundamental theorem of calculus, we have

\begin{equation*}
\begin{split}
    |\omega - \xi| & = \big|(\Phi_{m, t, \bdr}^k)^{-1}(w) - (\Phi_{m, t, \bdr}^k)^{-1}(z)\big| \\ 
    &= \Big|\int_0^{|w-z|} \big((\Phi_{m, t, \bdr}^k)^{-1}\big)'(\gamma(s))\gamma'(s)ds\Big|   \lesssim |w-z| \big\Vert |D(\Phi_{m, t, \bdr}^k)^{-1}|\big\Vert_{\infty}  
    \lesssim \rho/t,
    \end{split}
\end{equation*}
where the last inequality is  Lemma~\ref{lem: non deg} ii). This shows $\diam \Omega_m^k(t, \bdr; \rho) \lesssim \rho/t$, as required.
\end{proof}

We are now finally in position to conclude the proof of Lemma~\ref{lem: near root}. 

\begin{proof}[Proof of Lemma~\ref{lem: near root}] Fix $t \in (0,2]$ and $\bdr \in [1/2, 2]^n$. Since $E_{x, \bdr, < \rho}^{\delta, k}(0, \bdone)$ is contained in $B(0,2)$, we may assume without loss of generality that $\rho < c t$, where $c > 0$ is the dimensional constant appearing in the statement of Lemma~\ref{lem: non deg}.

We claim that 
\begin{equation}\label{eq: near root 1}
    E_{x, \bdr, < \rho}^{\delta, k}(0, \bdone) \subseteq \Omega^k(t, \bdr; \rho).
\end{equation}
Indeed, if $\omega \in E_{x, \bdr, < \rho}^{\delta, k}(0, \bdone)$, then it follows directly from the definition that $\omega \in \Omega^k$. On the other hand, the conditions $\|J_{x, \bdr}(\omega)\| < \rho$ and $\omega \in E^{\delta}(0,1)$, together with \eqref{eq: Cauchy--Binet}, imply that 
\begin{equation*}
\Big(\sum_{\substack{1 \leq j \leq n \\ j \neq k}}|G_j^k(\omega,t,\bdr)|^2\Big)^{1/2} < \rho/4 \qquad \textrm{and} \qquad |F_{0, \bdone}(\omega)| < \delta \leq \rho, 
\end{equation*}
 respectively. Thus, we have $|\Phi^k(\omega, t, \bdr)| < \rho$. Combining these observations with the definitions from \eqref{eq: X^k} and \eqref{eq: good omega set 2}, we obtain \eqref{eq: near root 1}. 

For $1 \leq m \leq M$, let $\Omega_m^k(t, \bdr; \rho)$ be the disjoint measurable sets featured in the decomposition of $\Omega^k(t, \bdr; \rho)$ from Lemma~\ref{lem: comp diam}. Furthermore, let $\Xi^k(t, \bdr; \rho) = \{\xi_1, \dots, \xi_M\}$ be an arbitrary set of representative elements for this decomposition, so that $\xi_m \in \Omega_m^k(t, \bdr; \rho)$ for $1 \leq m \leq M$. From Lemma~\ref{lem: comp diam}, we know that $\#\Xi^k(t, \bdr; \rho) = M \lesssim 1$ and there exists some dimensional constant $C_n \geq 1$ such that $\Omega_m^k(t, \bdr; \rho) \subseteq B(\xi_m, C_n\rho/t)$ for $1 \leq m \leq M$. Combining these observations with \eqref{eq: near root 1}, we conclude that
\begin{equation*}
    E_{x, \bdr, < \rho}^{\delta, k}(0, \bdone) \subseteq  \bigcup_{\xi \in \Xi^k(t, \bdr; \rho)} B(\xi, C_n\rho/t) \cup N 
\end{equation*}
for some null set $N \subseteq \R^n$, as required.     
\end{proof}




\section{Reduction to the discretised estimate}\label{sec: Fourier analysis} We conclude by showing how Theorem~\ref{thm: discretised} can be used to prove Theorem~\ref{thm: main}. There are two issues which need to be dealt with: the $\delta^{-\varepsilon}$-loss in the operator norm in \eqref{eq: discretised global} and the additional constraint that the principal radii lie in $[1,2]^n$.




\subsection{The local maximal operator}\label{subsec: loc max op} We first prove $L^p$ bounds for the local variant of $\cM_{\mathrm{st}}$ defined by
\begin{equation*}
    \cM_{\mathrm{loc}}f(x) := \sup_{\bdr \in [1,2]^n} |f \ast \sigma_{\bdr}(x)| \qquad \textrm{for $f \in C_c(\R^n)$.}
\end{equation*}
 For this, we reformulate Theorem~\ref{thm: discretised} as an $L^p$--Sobolev estimate and then interpolate with similar estimates from \cite{LLO}.\medskip
 
Let $\beta \in \cS(\R)$ be a Schwartz function with
\begin{equation*}
    \supp \beta \subseteq (1/2, 2) \qquad \textrm{such that} \qquad \sum_{j \in \Z} \beta(2^{-j}s) = 1 \qquad \textrm{for all $s \in \R \setminus \{0\}$. }
\end{equation*}For $j \in \Z$, let $\cP_j$ and $\cP_{< j}$ be the Fourier multiplier operators with symbols $\beta(2^{-j}\,\cdot\,)$ and $\sum_{i < j} \beta(2^{-i}\,\cdot\,)$, respectively.\medskip

By the $L^p$ boundedness of the (classical) strong maximal function,
\begin{equation*}
    \|\cM_{\mathrm{loc}} \circ \cP_{< 0}f\|_{L^p(\R^n)} \lesssim \|f\|_{L^p(\R^n)} \qquad \textrm{for all $1 < p < \infty$.}
\end{equation*}
For high frequencies, we have the following estimate. 

\begin{proposition}[$L^p$-Sobolev inequality]\label{prop: Lp Sobolev} For all $n \geq 3$ and $p > 2$, there exists an exponent $\eta(p) > 0$ such that
\begin{equation*}
    \|\cM_{\mathrm{loc}}(\cP_j f)\|_{L^p(\R^n)} \lesssim_p 2^{-j \eta(p)} \|f\|_{L^p(\R^n)}
\end{equation*}  
holds for all $f \in \cS(\R^n)$ and all $j \in \N_0$. 
\end{proposition}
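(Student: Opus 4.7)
The plan is to deduce Proposition~\ref{prop: Lp Sobolev} by interpolating an almost-trivial $L^2$-Sobolev estimate derived from Theorem~\ref{thm: discretised} against an $L^{p_0}$-Sobolev estimate supplied by \cite{LLO} at some exponent $p_0 > 2(n+1)/(n-1)$.

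First I would convert Theorem~\ref{thm: discretised} into the frequency-localised estimate
\begin{equation*}
    \|\cM_{\mathrm{loc}}(\cP_j f)\|_{L^2(\R^n)} \lesssim_\varepsilon 2^{j\varepsilon}\|f\|_{L^2(\R^n)}.
\end{equation*}
After a standard finite decomposition of $[1,2]^n$ into sub-boxes followed by an affine rescaling argument, it suffices to assume $\bdr \in \fkR$ in the definition of $\cM_{\mathrm{loc}}$. I would then choose a Schwartz function $\psi$ with $\widehat{\psi} \equiv 1$ on $\supp \beta$, set $\psi_j(y) := 2^{jn}\psi(2^j y)$, and use the frequency localisation to write $\cP_j f = \cP_j f \ast \psi_j$. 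A computation in local coordinates around the surface yields
\begin{equation*}
    |\psi_j \ast \sigma_{\bdr}(y)| \lesssim_N 2^j\bigl(1 + 2^j\, \dist(y, E(0, \bdr))\bigr)^{-N} \qquad \textrm{for any $N \geq 1$,}
\end{equation*}
and a dyadic decomposition in $\dist(y, E(0, \bdr))$ then produces the pointwise bound
\begin{equation*}
    \cM_{\mathrm{loc}}(\cP_j f)(x) \lesssim_N \sum_{k=0}^{j} 2^{-kN}\, M^{2^{k-j}}(|\cP_j f|)(x) + 2^{-jN}\cM_{\mathrm{HL}}(|\cP_j f|)(x),
\end{equation*}
where $\cM_{\mathrm{HL}}$ is the Hardy--Littlewood maximal function. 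Applying Theorem~\ref{thm: discretised} at each scale $\delta = 2^{k-j}$ and summing the resulting geometric series in $k$ (with $N$ taken sufficiently large) delivers the desired $L^2$-Sobolev bound.

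Next I would invoke the $L^{p_0}$-Sobolev estimate of \cite{LLO}, which takes the form
\begin{equation*}
    \|\cM_{\mathrm{loc}}(\cP_j f)\|_{L^{p_0}(\R^n)} \lesssim 2^{-j\alpha_0}\|f\|_{L^{p_0}(\R^n)}
\end{equation*}
for some $p_0 > 2(n+1)/(n-1)$ and $\alpha_0 > 0$. For each fixed $j$, the operator $\cM_{\mathrm{loc}} \circ \cP_j$ can be realised as the $L^\infty_{\bdr}$-norm of the measurable family of linear operators $\{f \mapsto (\cP_j f) \ast \sigma_{\bdr}\}_{\bdr \in [1,2]^n}$, so vector-valued Riesz--Thorin interpolation between $L^2_x(L^\infty_{\bdr})$ and $L^{p_0}_x(L^\infty_{\bdr})$ gives, for each $p \in (2, p_0)$ with $1/p = \theta/2 + (1-\theta)/p_0$,
\begin{equation*}
    \|\cM_{\mathrm{loc}}(\cP_j f)\|_{L^p(\R^n)} \lesssim 2^{j(\theta\varepsilon - (1-\theta)\alpha_0)}\|f\|_{L^p(\R^n)}.
\end{equation*}
Since $\theta \in (0, 1)$ for any such $p$, selecting $\varepsilon < (1-\theta)\alpha_0/\theta$ yields Proposition~\ref{prop: Lp Sobolev} with $\eta(p) := (1-\theta)\alpha_0 - \theta\varepsilon > 0$; the case $p \geq p_0$ is immediate from \cite{LLO}.

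The hard part will be executing the first step cleanly: one must verify that the dyadic decomposition genuinely bounds the Schwartz tails of $\psi_j \ast \sigma_{\bdr}$ by the stated sum (handling the regime $k > j$ separately via the $\cM_{\mathrm{HL}}$ error), that the passage from $\bdr \in [1,2]^n$ to $\bdr \in \fkR$ preserves $L^p$-operator norms up to constants via anisotropic rescaling, and that the tail contribution loses only a factor $O(2^{-jN})$ at every $p$. The rest is a routine application of Littlewood--Paley theory and vector-valued Riesz--Thorin interpolation.
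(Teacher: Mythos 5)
Your proposal is correct and follows essentially the same route as the paper: the paper likewise derives the $L^2$ bound $\|\cM_{\mathrm{loc}}(\cP_j f)\|_{L^2} \lesssim_\varepsilon 2^{j\varepsilon}\|f\|_{L^2}$ from Theorem~\ref{thm: discretised} by dominating the kernel of $\cP_j$ by normalised characteristic functions of neighbourhoods at dyadic scales (it cites \cite[Lemma 5.1]{Schlag1997} for the step you write out explicitly) and then interpolates against \cite[Proposition 2.2]{LLO}, which covers $p > 2\cdot\frac{n+1}{n-1}$. The only difference is presentational: you spell out the dyadic kernel decomposition and the linearisation/vector-valued interpolation that the paper treats as standard.
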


\begin{proof} By \cite[Proposition 2.2]{LLO}, the proposition holds in the restricted range $p > 2 \cdot \frac{n+1}{n-1}$. Our goal is to show that, for all $\varepsilon > 0$, the inequality
\begin{equation}\label{eq: L2 Sobolev}
    \|\cM_{\mathrm{loc}}(\cP_j f)\|_{L^2(\R^n)} \lesssim_{\varepsilon} 2^{j \varepsilon} \|f\|_{L^2(\R^n)}
\end{equation}
holds for all $j \in \N$. Indeed, once \eqref{eq: L2 Sobolev} is established, it can be interpolated against the bounds from \cite[Proposition 2.2]{LLO} to obtain the desired estimate in the full range. However, \eqref{eq: L2 Sobolev} is in fact a standard (and fairly direct) consequence of Theorem~\ref{thm: discretised}, by dominating the kernel of $\cP_j$ in terms of normalised characteristic functions of balls: see, for instance, \cite[Lemma 5.1]{Schlag1997}.
\end{proof}




\subsection{From local to global} The observations of \S\ref{subsec: loc max op} show that the local maximal operator $\cM_{\mathrm{loc}}$ is bounded on $L^p$ for all $p > 2$. It remains to remove the restriction on the principal radii to obtain bounds for the global maximal operator $\cM_{\mathrm{st}}$. This is achieved using the multiparameter Littlewood--Paley argument from \cite[\S3]{LLO}. 

\begin{proof}[Proof of Theorem~\ref{thm: main}] The proof now follows precisely the same argument as that used in \cite[\S3]{LLO}. The only change is that at the top of page 10 of \cite{LLO} we appeal to Proposition~\ref{prop: Lp Sobolev} above rather than \cite[Proposition 2.2]{LLO}.     
\end{proof}

\appendix



\section{Necessary conditions}\label{app: necessity} Here we show that the condition $p > \frac{n+1}{n-1}$ is necessary for the $L^p$ boundedness of $\cM_{\mathrm{st}}$, justifying the numerology in Conjecture~\ref{conj: main}.

\begin{lemma}\label{lem: necessity} For $1 \leq p \leq \frac{n+1}{n-1}$, there exists some $f \in L^p(\R^n)$ and a measurable set $F \subseteq \R^n$ of positive $n$-dimensional Lebesgue measure such that 
\begin{equation}\label{eq: bad f}
    \cM_{\mathrm{st}}f(x) = \infty \qquad \textrm{for all $x \in F$.}
\end{equation}
\end{lemma}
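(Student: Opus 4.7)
The plan is to construct, for each $1 \leq p \leq \frac{n+1}{n-1}$, a single function $f \in L^p(\R^n)$ whose maximal function $\cM_{\mathrm{st}} f$ is identically $+\infty$ on a fixed set $F$ of positive Lebesgue measure. The construction sums \emph{nested} Knapp-type slabs at dyadic scales and exploits the observation that a single first-order tangent ellipsoid simultaneously ``sees'' every scale of the nested family. Pleasantly, this handles both the strict range $p < \frac{n+1}{n-1}$ and the endpoint $p = \frac{n+1}{n-1}$ in one go.

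First I would rigorise the tangency heuristic from \S\ref{subsec: method}. Fix $\omega \in S^{n-1}$ with all coordinates nonzero and let $K(\delta) \subset \R^n$ denote the rectangular slab of dimensions $\delta \times \delta^{1/2} \times \cdots \times \delta^{1/2}$ centred at $\omega$ with short side normal to $S^{n-1}$. Solving the $n \times n$ algebraic system encoding the two conditions $\omega \in E(x, \bdr)$ and $\nabla F_{x, \bdr}(\omega) \parallel \omega$ yields a smooth map $x \mapsto \bdr_\star(x) \in (0,\infty)^n$ defined on an open set $F \subset \R^n$ of positive measure; one may verify the positivity $r_j^2 > 0$ explicitly in a small ball around $x_0 = 2\omega$, where the solution reduces to $\bdr_\star(x_0) = \bdone$ (so that $E(x_0, \bdone)$ is the unit sphere centred at $2\omega$, externally tangent to $S^{n-1}$ at $\omega$). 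Computing the second fundamental forms of $S^{n-1}$ and $E(x,\bdr_\star(x))$ at $\omega$ with a common normal, their difference is $2I$ at $x_0$ and therefore uniformly bounded away from zero on a compact subset of $F$ (generic first-order tangency). A standard quadratic expansion then yields the scale-uniform estimate
\begin{equation*}
    \sigma_{\bdr_\star(x)} \ast \chi_{K(\delta)}(x) \geq c_0\, \delta^{(n-1)/2}
    \qquad \text{for every $x \in F$ and $0 < \delta < \delta_0$,}
\end{equation*}
where $c_0, \delta_0 > 0$ depend only on $F$ and $\omega$. This is the precise content of the tangency lemma (Lemma~\ref{lem: tangencies}).

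Next, set $\delta_j := 2^{-j}$, $a_j := \delta_j^{-(n-1)/2}/j$, choose $j_0$ with $\delta_{j_0} < \delta_0$, and define
\begin{equation*}
    f(x) := \sum_{j \geq j_0} a_j\, \chi_{K(\delta_j)}(x).
\end{equation*}
The slabs $K(\delta_j)$ are nested and shrinking in $j$, so on the annular region $K(\delta_k) \setminus K(\delta_{k+1})$, of $n$-dimensional volume $\asymp \delta_k^{(n+1)/2}$, the partial sum is dominated by its last term $a_k \asymp \delta_k^{-(n-1)/2}/k$. Therefore
\begin{equation*}
    \|f\|_p^p \asymp \sum_{k \geq j_0} 2^{k[(n-1)p/2 - (n+1)/2]} / k^p.
\end{equation*}
For $p \leq \frac{n+1}{n-1}$ the exponent $(n-1)p/2 - (n+1)/2$ is $\leq 0$: when it is strictly negative the series converges geometrically, and at the endpoint it reduces to $\sum_k 1/k^p$, which converges since $p = \frac{n+1}{n-1} > 1$ for $n \geq 2$. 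Hence $f \in L^p(\R^n)$. Finally, for every $x \in F$ the single ellipsoid $E(x, \bdr_\star(x))$ is first-order tangent to $S^{n-1}$ at $\omega$, so the uniform lower bound above applies to \emph{every} scale $\delta_j$ simultaneously. By sublinearity of $\cM_{\mathrm{st}}$, positivity of $f$, and monotone convergence,
\begin{equation*}
    \cM_{\mathrm{st}} f(x) \geq \sigma_{\bdr_\star(x)} \ast f(x)
    = \sum_{j \geq j_0} a_j\, \sigma_{\bdr_\star(x)} \ast \chi_{K(\delta_j)}(x)
    \geq c_0 \sum_{j \geq j_0} \tfrac{1}{j} = \infty,
\end{equation*}
establishing \eqref{eq: bad f}.

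The main obstacle lies in the first step: one must construct the closed-form inverse of the $n \times n$ incidence-plus-tangency system in an open neighbourhood of $x_0 = 2\omega$, verify the positivity $r_j^2 > 0$ there, and quantify the second-order defect between $E(x, \bdr_\star(x))$ and $S^{n-1}$ at $\omega$ so that the implicit constants $c_0$ and $\delta_0$ are independent of the scale $\delta$. These are elementary but careful computations in ellipsoidal geometry; once carried out, the remainder of the argument combines Knapp scaling with the single observation that a tangent ellipsoid with nonzero curvature defect intersects each nested $K(\delta_j)$ in surface measure $\asymp \delta_j^{(n-1)/2}$, so that the divergence of $\sum 1/j$ produces the required blow-up.
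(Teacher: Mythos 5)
Your proposal is correct and follows essentially the same route as the paper: a superposition of Knapp-type slabs over all dyadic scales with a slowly divergent weight ($a_j\delta_j^{(n-1)/2}=1/j$; the paper's closed-form $g$ supported on a parabolic region with its $|\log|^{-n/(n+1)}$ damping is the same construction in disguise), combined with a tangency lemma producing a positive-measure set of centres whose ellipsoids are first-order tangent at a single fixed point and hence meet every scale simultaneously. The one step you defer — non-degeneracy of the incidence-plus-tangency system near the symmetric configuration — is exactly what the paper's Lemma~\ref{lem: tangencies} establishes, by solving the system explicitly as $x_j=r_j^2/|\bdr|$ and evaluating the Jacobian determinant via the identity $P(a)=(a-1)^{n-1}(a+n-1)$; it does hold, so your argument goes through.
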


Lemma~\ref{lem: necessity} relies on the following geometric observation, which makes precise the parameter counting heuristic from \S\ref{subsec: method}. Let $V$ denote the codimension $1$ linear subspace of $\R^n$ given by the orthogonal complement of the vector $N := \frac{1}{\sqrt{n}}(1, \dots, 1) \in \R^n$.

\begin{lemma}\label{lem: tangencies} There exists a measurable set $F \subseteq \R^n$ with positive $n$-dimensional measure such that the following holds. For all $x \in F$ there exists some $\bdr_x \in [1,2]^n$ such that:
 \begin{enumerate}[i)]
     \item $0 \in E(x, \bdr_x)$;
     \item The tangent plane to $E(x, \bdr_x)$ at $0$ agrees with $V$. 
 \end{enumerate}
\end{lemma}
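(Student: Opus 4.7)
\medskip

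\noindent\textbf{Proof plan.} The strategy is to reverse-engineer the problem: the two geometric conditions (i) and (ii) on the ellipsoid $E(x,\bdr_x)$ translate into a system of algebraic equations on $\bdr_x$ given $x$, and we will solve this system explicitly and then exhibit an open set $F$ of admissible $x$.

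\medskip

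\noindent\textbf{Translating the conditions.} Using the defining function in \eqref{eq: defining fn}, condition (i) reads
\begin{equation*}
    F_{x,\bdr_x}(0) = \sum_{j=1}^n \frac{x_j^2}{r_{x,j}^2} - 1 = 0.
\end{equation*}
For condition (ii), we compute $\nabla F_{x,\bdr_x}(0) = -2(x_j/r_{x,j}^2)_{j=1}^n$, and this vector must be parallel to $N = \tfrac{1}{\sqrt{n}}(1,\dots,1)$. Thus there exists $\lambda \in \R\setminus\{0\}$ with
\begin{equation*}
    r_{x,j}^2 = x_j/\lambda, \qquad j=1,\dots,n.
\end{equation*}
Substituting into the equation from (i) yields $\lambda \sum_{i=1}^n x_i = 1$, so $\lambda = 1/S$ where $S := \sum_{i=1}^n x_i$. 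Hence both conditions are satisfied if and only if
\begin{equation}\label{eq: r_j formula}
    r_{x,j}^2 = x_j \, S \qquad \text{for all } j = 1,\dots,n.
\end{equation}

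\medskip

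\noindent\textbf{Finding an open set of admissible $x$.} Define
\begin{equation*}
    F := \big\{ x \in \R^n : x_j S \in [1, 4] \text{ for all } j = 1,\dots,n \big\}.
\end{equation*}
For $x \in F$, formula \eqref{eq: r_j formula} produces $\bdr_x \in [1,2]^n$, so both (i) and (ii) hold. It remains to check that $F$ has positive $n$-dimensional Lebesgue measure. Consider the diagonal point $x^\circ := \tfrac{1}{\sqrt{n}}(1,\dots,1)$, for which $S^\circ = \sqrt{n}$ and $x_j^\circ S^\circ = 1$ for each $j$. Since the map $x \mapsto (x_j S)_{j=1}^n$ is continuous, and $1$ lies in the interior of $[1,4]$ from above but only on the boundary from below, we slightly scale: take instead $x^\star := \tfrac{t}{\sqrt{n}}(1,\dots,1)$ for a small $t > 1$, so that $x_j^\star S^\star = t^2$ lies strictly in the interior $(1,4)$ for $t$ sufficiently close to (but greater than) $1$. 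By continuity there is an open Euclidean ball around $x^\star$ contained in $F$, so $|F| > 0$, and $F$ is measurable (in fact open).

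\medskip

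\noindent\textbf{Main obstacle.} There is essentially no serious obstacle: the parameter count in the heuristic of \S\ref{subsec: method} works out cleanly here because the conditions (i) and (ii) impose exactly $n$ scalar equations on the $n$-parameter family of radii, and the resulting system has the explicit solution \eqref{eq: r_j formula}. The only subtlety is verifying the range constraint $\bdr_x \in [1,2]^n$, which is handled by choosing $x$ near the diagonal (or, more generally, near any point where the coordinates are comparable and the scale is correct).
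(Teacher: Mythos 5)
Your proof is correct, and it takes a genuinely more direct route than the paper's. You solve the system (i)--(ii) explicitly for $\bdr$ in terms of $x$, obtaining $r_{x,j}^2 = x_j S$ with $S = \sum_i x_i$, and then simply exhibit an explicit open neighbourhood of admissible $x$. The paper instead parametrises in the opposite direction, solving for $x$ in terms of $\bdr$ (obtaining $x_j = r_j^2/|\bdr|$, which is the inverse of your map, since $S = |\bdr|$ on the solution set), and then invokes the inverse function theorem, which requires an explicit computation of the Jacobian determinant $\det J\Phi$ at the diagonal point. Your approach avoids that determinant calculation entirely and is shorter; the paper's approach makes the non-degeneracy of the parameter-counting heuristic somewhat more visible, at the cost of extra work. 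One trivial slip: as written, $F = \{x : x_j S \in [1,4] \text{ for all } j\}$ is closed, not open (you would need the open interval $(1,4)$ for openness), but this does not affect the argument since you correctly show $F$ contains an open ball around $x^\star$ and hence has positive measure.
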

 
Temporarily assuming Lemma~\ref{lem: tangencies}, we may establish Lemma~\ref{lem: necessity}.

\begin{proof}[Proof of Lemma~\ref{lem: necessity}] Let $g \colon \R^n \to \R$ be given by
\begin{equation*}
    g(x',x_n) := 
    \begin{cases}
        |x'|^{-(n-1)} |\log_2(1/|x'|)|^{-n/(n+1)} & \textrm{if $|x_n| \leq C|x'|^2$ and $|x'| \leq 1/2$,} \\
        0 & \textrm{otherwise}
    \end{cases}
\end{equation*}
for $x = (x', x_n) \in \R^{n-1} \times \R$. Here $C \geq 1$ is a dimensional constant, chosen sufficiently large to satisfy the forthcoming requirements of the proof. Now fix an orthonormal basis $u_1, \dots, u_n$ of $\R^n$ with $u_n = N$ and let $U$ denote the $n \times n$ orthogonal matrix with $j$th row equal to $u_j$ for $1 \leq j \leq n$. Finally, define
\begin{equation*}
  f := g \circ U.   
\end{equation*}
Since $\frac{n}{n+1} \cdot \frac{n+1}{n-1} = \frac{n}{n-1} > 1$, it is straightforward to check that $f \in L^p(\R^n)$ for all $1 \leq p \leq \frac{n+1}{n-1}$. It remains to show that this choice of $f$ satisfies \eqref{eq: bad f}.\medskip

Let $F \subseteq \R^n$ be the measurable set guaranteed by Lemma~\ref{lem: tangencies}. Fix $x \in F$, so that there exists some $\bdr \in [1,2]^n$ such that $E(x, \bdr)$ passes through $0$ and is tangent to $V$ at $0$. By the definition of the strong spherical maximal function, 
\begin{equation*}
    \cM_{\mathrm{st}}f(x) \geq \int_{E(x, \bdr)} |f(\omega)|\,\ud \sigma_{x, \bdr}(\omega) \geq \sum_{\ell = 1}^{\infty} \int_{E_{\ell}(x, \bdr)} |f(\omega)|\,\ud \sigma_{x, \bdr}(\omega)
\end{equation*}
where $\sigma_{x, \bdr}$ denotes the normalised surface measure on $E(x, \bdr)$ and
\begin{equation*}
    E_{\ell}(x, \bdr) := \big\{\omega \in E(x, \bdr) : 2^{-(\ell+1)/2} < \big|\mathrm{proj}_V\,\omega\big| \leq 2^{-\ell/2} \big\}.
\end{equation*}
The set $E_{\ell}(x, \bdr)$ consists of all points on the ellipsoid $E(x,\bdr)$ which lie at distance roughly $2^{-\ell/2}$ from $0$ in the tangential directions. In view of the tangency and the bounded curvature of the ellipsoid, these points are distance $O(2^{-\ell})$ from $0$ in the normal direction: in other words, $|\inn{\omega}{N}| \lesssim 2^{-\ell}$ for all $\omega \in E_{\ell}(x, \bdr)$. Thus, provided the constant $C \geq 1$ is chosen sufficiently large, $|\inn{\omega}{N}| \leq C|\mathrm{proj}_V\,\omega|$ for all $\omega \in E_{\ell}(x, \bdr)$ and so
\begin{equation*}
   |f(\omega)| = 
        |\mathrm{proj}_V\,\omega|^{-(n-1)/2} |\log_2(1/|\mathrm{proj}_V\,\omega|)|^{-n/(n+1)} \gtrsim 2^{\ell(n-1)/2} \ell^{-n/(n+1)}
\end{equation*}
for all $\omega \in E_{\ell}(x, \bdr)$. Since $\sigma(C_{x, \ell}) \gtrsim 2^{-\ell(n-1)/2}$ and $\frac{n}{n+1} < 1$, we deduce that
\begin{equation*}
      \cM_{\mathrm{st}}f(x) \gtrsim \sum_{\ell = 1}^{\infty} \ell^{-n/(n+1)} = \infty.
\end{equation*}
Thus $\cM_{\mathrm{st}}f(x) = \infty$ for all $x \in F$ and, as $F$ has positive measure, this concludes the proof.\end{proof}

It remains to verify the tangency lemma.

\begin{proof}[Proof of Lemma~\ref{lem: tangencies}] Given $\bdr \in [1,2]^n$, let $D_{\bdr}$ be the diagonal matrix such that the eigenvalue of the coordinate vector $e_j$ is $r_j^{-2}$. Then $F_{x,\bdr}(y) = \inn{D_{\bdr}(y-x)}{y - x}$ for all $y \in \R^n$, where $F_{x, \bdr}$ is the defining function from \eqref{eq: defining fn}. Conditions i) and ii) of the present lemma can be succinctly expressed as 
\begin{equation*}
     \inn{D_{\bdr}x}{x} = 1 \qquad \textrm{and} \qquad D_{\bdr} x = \lambda N \qquad \textrm{for some $\lambda \in \R \setminus \{0\}$.}
\end{equation*}
From this, it is not difficult to solve for $x$ in terms of $\bdr$ to deduce that
\begin{equation*}
    x_j = \frac{r_j^2}{|\bdr|} \quad \textrm{for $1 \leq j \leq n$} \quad \textrm{where} \quad |\bdr| := \Big(\sum_{j=1}^n r_j^2 \Big)^{1/2}
\end{equation*}
is the usual Euclidean norm.\medskip

Define the mapping $\Phi \colon \R^n \setminus \{0\} \to \R^n$ by 
\begin{equation*}
    \Phi(\bdr) := \frac{1}{|\bdr|}\big(r_1^2, \dots, r_n^2 \big) \qquad \textrm{for $\bdr \in \R^n \setminus \{0\}$.}
\end{equation*}
The Jacobian of this mapping is given by
\begin{equation*}
    J\Phi(\bdr) = \frac{1}{|\bdr|^3}
    \begin{bmatrix}
        2\sum_{j=1}^n r_j^3 - r_1^3 & - r_1^2 r_2 & \cdots & -r_1^2 r_n \\
      - r_1 r_2^2  & 2\sum_{j=1}^n r_j^3 - r_2^3 & \cdots & -r_2^2 r_n \\
        \vdots & \vdots & \ddots & \vdots \\
        - r_1 r_n^2  & -r_2 r_n^2 & \cdots &  2\sum_{j=1}^n r_j^3 - r_n^3 
    \end{bmatrix}.
\end{equation*}
We consider the special case $r_1 = \cdots = r_n = r$ and take the determinant to obtain
\begin{equation*}
    \det J\Phi(r, \dots, r) =  \frac{(-1)^n r^{3(n-1)}}{n^{3/2}} P(-(2n-1))
\end{equation*}
where
\begin{equation*}
P(a) :=
    \det \begin{bmatrix}
        a & 1 & \cdots & 1 \\
      1  & a & \cdots & 1 \\
        \vdots & \vdots & \ddots & \vdots \\
      1  & 1 & \cdots & a
    \end{bmatrix} \qquad  \textrm{for all $a \in \R$.}
\end{equation*}
It is a simple matter to show that $P(a) = (a-1)^{n-1}(a + n - 1)$ for all $a \in \R$. Indeed, it is clear that $P$ is a monic polynomial of degree $n$. By linear dependence relations between the columns, $P(1) = 0$ and, combining similar observations with the Leibniz rule, we further see that
\begin{equation*}
   P(1) = P^{(1)}(1) = \cdots = P^{(n-1)}(1) = 0.
\end{equation*}
Thus, $P(a) = (a - 1)^{n-1}(a - \xi)$ for some $\xi \in \R$. On the other hand, it is also clear that $P(-(n-1)) = 0$, since the sum of the columns of the matrix is the zero vector in this case. Hence we must have $\xi = -(n-1)$.\medskip

Using the above we see, for instance, that $|\det J\Phi(3/2, \dots, 3/2)| \gtrsim 1$ and so there exists some set open set $\Omega \subseteq [1,2]^n$ containing $(3/2, \dots, 3/2)$ upon which $\Phi$ is a bijection. Let $F := \Phi(\Omega)$ and for each $x \in F$ let $\bdr_x$ denote the unique $\bdr \in \Omega$ such that $\Phi(\bdr_x) = x$. By construction, it follows that the ellipsoid $E(x, \bdr_x)$ satisfies properties i) and ii) and, furthermore, $|F| \gtrsim 1$. 
\end{proof}




\bibliography{Reference}
\bibliographystyle{amsplain}

\end{document}